\documentclass[11pt, a4paper, reqno]{amsart}
\usepackage{latexsym}
\usepackage{amsthm, amssymb, amsfonts}

\usepackage{algorithm,algorithmicx}
\floatname{algorithm}{Procedure}
\usepackage{algpseudocode}
\algblock{Input}{EndInput}
\algnotext{EndInput}
\algblock{Output}{EndOutput}
\algnotext{EndOutput}
\algblock{Result}{EndResult}
\algnotext{EndResult}
\newcommand{\Desc}[2]{\State \makebox[5em][l]{#1}#2}
\def\shor{{\sc Shor}}

\makeatletter
\newcommand{\StatexIndent}[1][3]{%
  \setlength\@tempdima{\algorithmicindent}%
  \Statex\hskip\dimexpr#1\@tempdima\relax}
\makeatother

\usepackage{amsmath}
\usepackage[colorlinks=true,linkcolor=blue,citecolor=dgreen]{hyperref}
\usepackage[shortlabels]{enumitem}
\usepackage{amssymb}
\usepackage{amsmath,float}
\usepackage{tikz}
\usepackage{cleveref}
\usepackage[numbers]{natbib}
\usepackage[english]{babel} 
\usepackage{blindtext}
 \usetikzlibrary{patterns}

\definecolor{shadecolor}{rgb}{0.9,0.9,0.9}
\def\={\equiv}

\newtheorem{theorem}{Theorem}

\newtheorem{thm}[theorem]{Theorem}

\newtheorem{conj}[theorem]{Conjecture}
\newtheorem{ques}{Question}

\newtheorem{lemma}[theorem]{Lemma}

\theoremstyle{definition}

\newcommand{\Z}{\mathbb{Z}}
\newcommand{\GG}{\mathcal{G}}
\newcommand{\HH}{\mathcal{H}}

\newcommand{\XX}{\mathcal{X}}
\newcommand{\YY}{\mathcal{Y}}

\definecolor{dgreen}{rgb}{0,.8,0}
\renewcommand{\leq}{\leqslant}
\renewcommand{\geq}{\geqslant}
\renewcommand{\le}{\leqslant}
\renewcommand{\ge}{\geqslant}
\renewcommand{\emptyset}{\varnothing}

\definecolor{SkyBlue}{RGB}{86,180,233}

\def\eref#1{$(\ref{#1})$}
\def\sref#1{\S$\ref{#1}$}
\def\lref#1{Lemma~$\ref{#1}$}
\def\lnref#1{Line~$\ref{#1}$}
\def\tref#1{Theorem~$\ref{#1}$}
\def\cjref#1{Conjecture~$\ref{#1}$}

\def\aref#1{Procedure~$\ref{#1}$}
\def\qref#1{Question~$\ref{#1}$}
\providecommand{\LatinGrid}[1]{%
  \begin{scope}[x=1cm,y=1cm]
    \draw[thick] (0,0) rectangle (#1,-#1);
    \draw[gray!60] (0,0) grid (#1,-#1);
  \end{scope}%
}
\crefname{equation}{}{}
\crefname{enumi}{}{}
\crefmultiformat{lemma}{lemmas~#2#1#3}{ and~#2#1#3}{, #2#1#3}{ and~#2#1#3}

\title{\bf Latin Squares whose transversals intersect in unusual ways}

\author{Afsane Ghafari and Ian M. Wanless}

\thanks{
School of Mathematics, Monash University, Vic 3800, Australia.
{\tt afsane.ghafaribaghestani@monash.edu, ian.wanless@monash.edu}.
}

\date{}
\begin{document}

\maketitle

\begin{abstract}
  A \emph{latin square} of order $n$ is an $n\times n$ array in which
  each of $n$ symbols occurs exactly once in each row and column.  A
  \emph{transversal} in such a square is a selection of $n$ entries
  that includes one representative of each row and column, and one of
  each symbol.  For all even orders $n\ge 28$ except $n=30$, we
  construct a latin square of order $n$ in which every pair of
  transversals share at least one entry. We conjecture that in our
  squares there is no single entry that is common to all transversals.
  We prove this conjecture for $n\le10\,000$ by finding transversals
  using an algorithm that is likely to be of independent interest.

  We say that a transversal is \emph{dominant} if it intersects every
  other transversal of the same latin square. We show that there exist
  latin squares of order $n$ that have a dominant transversal for
  $n\in\{5,7\}$ and also for all $n\ge8$ such that
  $n\not\equiv3\bmod4$.
\end{abstract}

\section{Introduction}\label{s:intro}

A \emph{latin square} is an $n \times n$ array consisting of $n$
distinct symbols where each symbol appears exactly once in each row
and each column. Let $L$ be a latin square over a symbol set $S$ of
size $n$.
A triple $(r,c,s)$ is called an \emph{entry} of a latin square $L$ when
$s$ is the symbol in cell $(r,c)$ of $L$.
Note that the latin property ensures that two
different entries agree in at most one coordinate. A \emph{diagonal}
$D$ of $L$ is a set of entries of $L$ that contains exactly one
representative from each row and each column. The weight of diagonal
$D$ is defined to be the number of distinct symbols included in the
entries in $D$. A \emph{transversal} of a latin square of order $n$ is
a diagonal of weight $n$. We say that an entry of a latin square is
\emph{pinned} if it is contained in every transversal (and there is at
least one transversal).  For details of the fascinating history of
transversals, see the survey articles \cite{Mon24,Wan11}.

Available evidence suggests that transversals behave differently
depending on the parity of the order $n$ of the latin square.  Latin
squares of odd order are conjectured to all have transversals, whereas
we know of superexponentially many latin squares of even order that
have no transversals \cite{CW17}. Even when latin squares of even
order have transversals, they can be heavily constricted, as evidenced
by this recent result from \cite{GW26}.

\begin{thm}\label{t:pinned}
  Let $n\geq 10$ be an even integer. There exists a latin square of
  order $n$ with at least $\big\lfloor {n}/{6} \big\rfloor$ 
  pinned entries.
\end{thm}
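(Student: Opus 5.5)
We would prove \tref{t:pinned} by an explicit construction built from the cyclic group. For even $n$, the Cayley table of $\Z_n$ has no transversals. The reason is the standard sum argument. If $(r_i,c_i,s_i)$ were a transversal of the table $s=r+c$, then summing gives
\[
\sum s_i=\sum r_i+\sum c_i .
\]
Each of the three sums equals $n/2 \bmod n$, so we would need $n/2\equiv n/2+n/2 = 0 \pmod n$, which is false. Perturbing $\Z_n$ by a few local switches therefore leaves a square with very few transversals, all forced to use the perturbed region. The plan is to make $k=\lfloor n/6\rfloor$ disjoint perturbations, each occupying a small set of rows, columns and symbols, and to arrange that any transversal must use one specific entry inside each perturbation.

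Concretely, write $n=6k+t$ with $0\le t<6$ even. We partition the row set into blocks of six consecutive rows (plus a leftover block of size $t$), and do the same for columns and symbols so that the blocks match up. Inside the $i$th block we perform a small cycle switch, i.e.\ we replace a $2\times2$ or $3\times 3$ intercalate-like configuration of $\Z_n$ by an alternative filling. This changes the ``defect'' $\sum s-\sum r-\sum c \pmod n$ contributed by any diagonal that passes through particular modified cells. The replacement is chosen so that a diagonal can recover the missing $n/2$ only through one distinguished entry $e_i$ in block $i$.

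The key counting step is a weighted-sum argument. Fix a transversal $T$. Every unmodified entry contributes $0$ to $s-r-c \pmod n$, and each modified entry contributes a known small value. Summing over $T$ forces the total contribution of the modified entries in $T$ to equal $n/2$. To show that every $e_i$ must lie in $T$, we would use a finer invariant, namely a block-wise version of the sum obtained by projecting onto $\Z_2$ or $\Z_3$ via a homomorphism. With this we aim to show that the parity condition must be satisfied separately within each block. Blocks of size about $6$ give enough room to do this, which explains the bound $n/6$.

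Finally, we must exhibit at least one transversal, since being pinned requires the existence of one. We would do this by an explicit construction of the form $\{(x,\sigma(x),x+\sigma(x))\}$ adjusted at the modified cells, verified case by case according to the residue of $n \bmod 6$. The small cases starting from $n=10$ are handled by the same pattern, or by a direct check if needed.

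The main obstacle is the block-by-block forcing. A global sum argument only forces some odd combination of modifications to be used, not every distinguished entry. We would first try to choose the symbols in block $i$ so that modified entries in different blocks cannot substitute for one another, for instance by making the row, column and symbol sets of each block closed under the relevant partial structure. We would then argue locally that a transversal using a modified cell in block $i$ must use exactly $e_i$ there.
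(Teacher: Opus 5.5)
The paper does not prove this theorem; it quotes it from \cite{GW26}. So the useful comparison is with the forcing mechanism the paper uses for its analogous statements. Measured against that, your proposal has a genuine gap, exactly at the point you flag as the main obstacle: nothing in it forces a particular entry $e_i$ into every transversal, and your suggested fix cannot work.

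Composing $s-r-c$ with a homomorphism $\Z_n\to\Z_2$ (or $\Z_3$, which needs $3\mid n$) gives a single global congruence. It is implied by \lref{l:Delta} and is weaker than it. There is no block-wise version. The entries of a transversal in the rows of block $i$ need not lie in the columns of block $i$, or use its symbols, so their partial sum has no prescribed value. Closing up the blocks does not change this. Congruences of this kind only constrain how many (weighted) entries of some set are used, never which one.

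The local switches are not available either. The only intercalates in the table of $\Z_n$ are $\{r,r+n/2\}\times\{c,c+n/2\}$. Switching one gives four entries, all with $\Delta=n/2$, so \lref{l:Delta} merely asks for an odd number of switched entries, with no control over which. More generally, for $n\ge 12$ no latin trade of $\Z_n$ fits inside six consecutive rows and six consecutive columns. To see this, lift the symbols to the integers $r+c$ (no wrap-around occurs) and write the traded symbols as $r+c+\delta_{rc}$. Every row and column sum of $\delta$ vanishes, and comparing the sums of squared symbols before and after gives $\sum\delta_{rc}^2=0$. So any perturbation of $\Z_n$ must spread over far-apart rows or columns. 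Compare rows $15$ and $17$ in \eqref{Structure}, which change in every even column. Independent local blocks therefore do not exist.

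The missing idea is to keep $\Delta$ integer-valued, as in \eqref{Copyfascinating}, and to replace finer congruences by a tight inequality. Let $M_r$ and $m_r$ be the largest and smallest $\Delta$-values in row $r$. Suppose the perturbation is designed so that $\sum_r M_r=n/2$ while $\sum_r m_r>-n/2$. Then by \lref{l:Delta} every transversal has $\Delta$-sum exactly $n/2$, so it must use an entry with $\Delta=M_r$ in every row $r$. Each row in which $M_r$ is attained by a unique entry then yields a pinned entry, once a single transversal is known to exist.

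This is exactly how the paper forces entries in \lref{twomodfour}, \lref{zeromodfour}, \lref{orders2832} and \lref{orders34}. There the bound is deliberately left with some slack, so that two of three distinguished entries are forced but none is pinned. The mechanism removes your obstacle: a single global condition does force every row once it is attained with equality. The number of pinned entries obtainable is then governed by how much of the budget $n/2$ each uniquely attained row maximum costs, while keeping $\sum_r m_r>-n/2$.

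Finally, the existence of a transversal is not a formality to be checked ``case by case''. In squares perturbed this way, transversals can be hard to find; the paper resorts to a computer search for $\GG_n$ and $\HH_n$. A proof therefore needs an explicit transversal valid for every even $n\ge 10$, and your proposal does not supply one.
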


Distinct transversals of a latin square of order $n$ can share any
number up to $n-3$ entries in common \cite{CW10}.  The original
motivation for studying transversals came from orthogonal
latin squares. In that context, it is crucial to study disjoint
transversals, that is, ones that have no entries in common. From the
following recent breakthrough result of Bowtell and Montgomery
\cite{BM26} we understand the typical behaviour of transversals in
this regard:

\begin{thm}\label{t:mostOLS}
  Asymptotically almost all latin squares have a decomposition into disjoint
  transversals.
\end{thm}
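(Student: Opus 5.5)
This statement is the theorem of Bowtell and Montgomery \cite{BM26}, and I would not try to reprove it here. If I had to reconstruct the argument, I would first translate the problem into hypergraph language. A latin square $L$ of order $n$ is a linear $3$-partite $3$-uniform hypergraph $H_L$. Its vertex classes are the rows, the columns and the symbols, and its edges are the $n^2$ entries. Every vertex has degree $n$. A transversal is exactly a perfect matching of $H_L$. So a decomposition into disjoint transversals is a proper edge-colouring of $H_L$ with $n$ colours, which is a $1$-factorization. The task splits into two parts. The first is to show that a uniformly random latin square is pseudorandom with high probability. The second is to show that every sufficiently pseudorandom latin square has such a decomposition.

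For the first part, I would aim for three properties of a random $L$, each holding with probability $1-o(1)$.
\begin{enumerate}[(i)]
\item Every triple of sets of rows, columns and symbols, each of linear size, spans about the expected number of entries (a discrepancy bound).
\item Small configurations, such as intercalates and the short ``switching'' structures used later, occur in roughly their expected numbers around every row, column and symbol.
\item There are no unusually concentrated substructures.
\end{enumerate}
The difficulty is that entries of a random latin square are not independent. I would get these estimates from switching and counting arguments, using the known permanent-based estimates for the number of latin squares. The goal is to compare the number of squares with and without a given local configuration.

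For the second part, I would use an absorption strategy.
\begin{enumerate}[(a)]
\item Set aside a small random reservoir of entries, together with an absorbing structure. The absorbing structure should be able to fix any small leftover that satisfies the obvious balance conditions.
\item Colour almost all remaining entries with $n$ colours, each colour class a partial matching. I would do this with a semi-random nibble (Pippenger--Spencer style), or by a random greedy colouring. The uncoloured part should have maximum degree $o(n)$ and remain well distributed.
\item Greedily extend each partial transversal using the reservoir, which pseudorandomness allows, so that only a very sparse, structured remainder is left.
\item Absorb that remainder with the prepared absorbers. These would be built from local switchings along cycles of entries that exchange colours between transversals.
\end{enumerate}

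I expect the main obstacle to be the final exact step. Obtaining $n-o(n)$ nearly disjoint near-transversals is comparatively standard. Completing every colour class to a full transversal simultaneously, while using each entry exactly once, is much harder. The absorbers must work for all colours at once and must be present in a typical latin square. This is also where the weak, dependent randomness of $L$ must be controlled most carefully.
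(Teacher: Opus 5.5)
You treat this the same way the paper does: the paper gives no proof of its own and simply cites the statement as the theorem of Bowtell and Montgomery \cite{BM26}, using it only as motivation, so deferring to that reference is the appropriate treatment here. Your hypergraph reformulation is correct (transversals are perfect matchings of the linear $3$-partite $3$-graph, and a decomposition is a $1$-factorization), but the pseudorandomness-plus-absorption outline is only a heuristic reconstruction, not a proof, and the paper neither supplies nor needs one.
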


The present paper is motivated by trying to understand transversal
behaviour which is very far from the typical behaviour seen in
\tref{t:mostOLS}. The latin squares in \tref{t:pinned} are in some
sense very far from having disjoint transversals. In this paper we
demonstrate a different way in which latin squares of even order can
fail to have disjoint transversals.

\begin{conj}\label{cj:main}
  Let $n \ge 28$ be an even integer. There is a latin square of order
  $n$ that has no two disjoint transversals, but also has no pinned entry.
\end{conj}

The existence of the latin squares that are the subject of
\cjref{cj:main} was posed to us as a question by Nick Wormald.  We
will prove \cjref{cj:main} for $n=28$ and $32\le n\le 10\,000$.  We
believe the construction that we use will generalise to all even
$n>10\,000$ but we have not been able to demonstrate existence of the
transversals required to show that no entry is pinned. We can,
however, show that they will never have disjoint transversals.

\medskip

In a related but distinct direction we also investigate latin squares
that have a \emph{dominant transversal}, which we define to be
a transversal that intersects all other transversals. We show that
some nice classes of latin squares do not have dominant transversals.
The techniques used to prove \tref{t:mostOLS} can be adapted to show that
almost all latin squares do not have a dominant transversal. So our last main
result can also be viewed as exhibiting a pathological behaviour:

\begin{thm}\label{t:dom}
  If $n>6$ and $n\not\equiv3\bmod4$ then there exists a latin square
  of order $n$ that has a dominant transversal.
\end{thm}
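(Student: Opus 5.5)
The plan is to build the square by perturbing a group table that has no transversals, so that every transversal is forced through a small modified region $M$ of entries. The property to engineer is a \emph{majority property}: every transversal contains more than $|M|/2$ entries of $M$, where the overlaps are counted in terms of the entries of $M$ that lie in cells where $L$ differs from the original group table. Once this holds, any two transversals meet inside $M$ by pigeonhole. So \emph{every} transversal is dominant, and it only remains to exhibit one transversal.

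\textbf{Even $n$.} Start from the Cayley table $C_n$ of $\Z_n$, which has no transversals because $\sum_r r+\sum_c c-\sum_s s\equiv n/2\not\equiv0 \pmod n$.
\begin{enumerate}[(i)]
\item Replace a small subsquare-like region of $C_n$ by a different filling, for instance by turning a few intercalates or cycles. Call the resulting set of new entries $M$.
\item For a diagonal $D$, define its defect as $\sum_{(r,c,s)\in D}(r+c-s)\bmod n$. Every entry outside $M$ contributes $0$, while each entry of $M$ contributes a known nonzero shift $\delta_e$.
\item A transversal $D$ must then satisfy $\sum_{e\in D\cap M}\delta_e\equiv n/2 \pmod n$.
\item Choose the switch so that the shifts $\delta_e$ are, say, all equal to a value $\delta$ with $k\delta\equiv n/2$ only for $k$ in a range with $k>|M|/2$. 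For example, use shifts of $\pm1$ on a strip, so that hitting $n/2$ requires many entries of $M$. This yields the majority property.
\item Write down one explicit transversal directly.
\end{enumerate}
The residue $n\bmod4$ enters through the arithmetic of $k\delta\equiv n/2$. This is where I expect $n\equiv0$ and $n\equiv2 \pmod 4$ to require slightly different switches.

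\textbf{Odd $n\equiv1 \pmod 4$.} Use the even construction of order $n-1\equiv0 \pmod 4$ and apply a prolongation that adds one row, one column and one symbol along a chosen diagonal of the even square. A transversal of the prolonged square restricts, after undoing the prolongation on at most one entry, to a near-transversal of the old square. I will then rerun the defect argument with the new symbol accounted for, again forcing a majority of $M$. The obstruction for $n\equiv3 \pmod 4$ presumably comes from the parity of $(n-1)/2$ in this defect count, which is why that class is excluded. Small orders such as $9,12,13,16,\dots$ up to the point where the general switch fits, and any sporadic case, I would settle by computer search for a square whose transversal-intersection graph has a vertex adjacent to everything.

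\textbf{Main obstacle.} The hard step is designing $M$ so that the congruence genuinely forces more than $|M|/2$ hits, and not merely at least one. The shifts must be chosen so that no small subset of $M$ sums to $n/2$ modulo $n$, and the switch must still produce a valid latin square containing at least one transversal. I would try first a ``staircase'' cycle switch in which every altered entry changes its symbol by $+1$. Then $k$ entries of $M$ in a transversal give defect $k$, so the constraint forces $k\equiv n/2 \pmod n$. Taking $|M|<n$, this means exactly $n/2$ entries of $M$ lie in every transversal, which beats $|M|/2$ whenever $|M|<n$. Checking that such a switch exists and admits a transversal is the remaining work.
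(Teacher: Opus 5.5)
\textbf{There is a genuine gap.} It sits exactly at the step you flag as the main obstacle, and the mechanism you propose there cannot exist.

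In any latin square indexed by $\Z_n$, every row satisfies $\sum_c(L[r,c]-r-c)\equiv\sum_s s-nr-\sum_c c\equiv0\pmod n$, and the same holds for every column.
\begin{enumerate}[(a)]
\item \emph{Uniform shift $+1$.} If every altered entry is shifted by $+1$, each row contains $0$ or $n$ altered cells. So $M$ is empty or the whole square, and for even $n$ neither has a transversal.
\item \emph{Uniform shift $\delta$.} Let $g=\gcd(\delta,n)$. Each altered row contains a positive multiple of $n/g$ altered cells, and at least two rows are altered, so $|M|\ge 2n/g$. The transversal congruence $k\delta\equiv n/2\pmod n$, when solvable at all, only gives $k\equiv n/(2g)\pmod{n/g}$. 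That is compatible with $k=n/(2g)\le|M|/4$, so no majority is forced.
\item \emph{Shifts of $\pm1$.} A transversal needs at least $n/2$ entries of $M$ of one sign, in distinct rows. Row balance puts at least one further altered cell in each of those rows, so $|M|\ge n$. The congruence then yields only $k\ge n/2$, which is not a strict majority.
\end{enumerate}
A genuine majority needs something like the extremal-sum design of \sref{s:nodisj}. There the $\Delta$-values are tuned so that any diagonal with at most one of three special entries has $\Delta$-sum strictly between $-n/2$ and $n/2$, which forces every transversal to contain two of the three. Even in that setting, proving a transversal exists for all $n$ is exactly what the paper could do only by computer for $n\le 10\,000$. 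Your step (v), exhibiting a transversal, is where this difficulty lives, and you give no construction for it.

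\textbf{The odd case has no argument either.} For odd $n$, \lref{l:Delta} only gives $\sum\Delta\equiv0$, which forces nothing. Also, a transversal of a prolongation can meet the new row, new column and new symbol in three different entries. So it does not become a near-transversal of the old square by undoing one entry.

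\textbf{Your target is stronger than needed, and the paper exploits this.}
\begin{enumerate}[(a)]
\item For $n=8$ the paper gives an explicit square.
\item For even $n\ge10$ it invokes \tref{t:pinned}. A pinned entry presupposes a transversal and makes every transversal dominant.
\item For $n=4k+1$ it takes a square with a subsquare $S$ of order $2k$ having a transversal $T_1$. It completes $T_1$ by the set $T_2$ of $2k+1$ cells in the complementary block whose symbols lie outside $S$. The $\Z_2$-valued invariant $\Delta_2=\theta(s)-\theta(r)-\theta(c)$ then shows every transversal meets $T_2$ an odd number of times.
\end{enumerate}
In the odd case only the single transversal $T_1\cup T_2$ is shown to be dominant. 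The restriction $n\not\equiv3\pmod 4$ enters because this parity argument needs the complement of the subsquare, of size $(n+1)/2$, to be odd.
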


It is plausible that \tref{t:dom} remains true without the
restriction that $n\not\equiv3\bmod4$.  Similarly, \cjref{cj:main} may
be true for some values of $n<28$, although we know from \cite{EW12}
that it is not true for $n\le8$.

The structure of the paper is as follows. In \sref{s:algo} we describe
a randomised algorithm which is effective at finding transversals. It
can have restrictions imposed as to whether certain entries are or are
not included.  In \sref{s:nodisj}, we give a construction for latin
squares which we believe will prove \cjref{cj:main}. We are able to
show that our latin squares have some of the required properties, and
to confirm they have all of the required properties when $n\le
10\,000$. In \sref{s:mopup} we give a couple more examples which plug
gaps for small orders not covered by \sref{s:nodisj}. In \sref{s:dom}
we discuss dominant transversals and prove \tref{t:dom}.

We assume throughout that our latin squares of order $n$ are indexed
by $\Z_n$, meaning that we use $\Z_n$ for row indices, column indices
and symbols.

\section{Algorithm Explanation}\label{s:algo}

In this section, we describe a hill-climbing algorithm that quickly
finds transversals in even moderately large latin squares.  Suppose
that we are looking for a transversal in a given latin square $L$ of
order $n$.  The user is able to specify a set $F$ of forbidden entries
and a set $R$ of required entries. The task for the algorithm is to
find a transversal $T$ of $L$ that includes $R$ and is disjoint from
$F$. The algorithm progressively modifies a permutation $P$ of
$\Z_n$. At any given time $P$ induces a diagonal $D_P$ defined by
$D_P=\{(i,P[i],\star):i\in\Z_n\}$. Here, and henceforth we use $\star$
to denote the unique symbol that makes a triple into an entry of $L$
(so in this particular case $\star$ denotes $L[i,P[i]]$).  The
\emph{weight} $w(P)$ of $P$ is defined to be the number of different
symbols on $D_P$.  In other words,
\[
w(P)=\big|\{s:(i,P[i],s)\in D_P\text{ for some }i\in\Z_n\}\big|.
\]
The goal is to increase the weight of $P$ until $w(P)=n$, at which
point we have located a transversal. The permutation $P$ is modified
using Procedure~\ref{a:Shor}, which performs \#-moves as introduced by Shor~\cite{Sho82}.

\begin{algorithm}[H]
  \caption{\label{a:Shor}Perform a Shor \#-move}
  \begin{algorithmic}[1]  
    \Input
    \Desc{$i,j$}{Indices in $\Z_n$}
    \EndInput
    \Result
    \Desc{Swaps the values of $P[i]$ and $P[j]$, if allowed}
    \EndResult
    \Procedure{Shor$(i,j)$}{}
    \If{$\{(i,P[j],\star),(j,P[i],\star)\}\cap F=\emptyset$ and $\{(i,P[i],\star),(j,P[j],\star)\}\cap R=\emptyset$}
    \State $t:=P[i]$.
    \State $P[i]:=P[j]$.
    \State $P[j]:=t$.
    \State Update $P^{-1}$, $E$, $U$.\label{ln:upd}
    \EndIf
    \EndProcedure
  \end{algorithmic}
\end{algorithm}

Our algorithm relies on the following variables and data structures:
\begin{itemize}
  \item A permutation $P$ of $\Z_n$. We consider $P$ to be a map from
    rows to columns of $L$, thereby identifying a diagonal $D_P$ of $L$.
  \item The map $P^{-1}$ which is the inverse of $P$.
  \item The map $L^{-1}:\Z_n\times\Z_n\rightarrow\Z_n$ defined by
    $L^{-1}[r,s]=c$ if $L[r,c]=s$, which identifies the column in
    which a particular symbol occurs in a particular row. Since $L$
    does not change, $L^{-1}$ can be precomputed and stored in an
    array if desired.  Doing so is advisable unless $L$ is defined by
    some easily inverted function.
  \item A set $U$ of ``unused'' symbols. These are the symbols that
    currently do not appear on $D_P$.
  \item A set $E$ of ``excess'' rows. These are rows in which the
    symbol on $D_P$ also occurs on $D_P$ in at least one other row.
\end{itemize}
Other auxiliary variables may be required to efficiently perform steps such as
\lnref{ln:upd} of \aref{a:Shor}. For example, we found it helpful to
maintain, for each symbol, a list of the rows in which that symbol
occurs on $D_P$. However, for the sake of clarity, we omit most low-level
implementation details.

Our main algorithm is \aref{a:main} and line numbers refer to it
unless otherwise specified.  In \lnref{ln:randP}, we choose the
initial value of $P$. How we do this is not particularly important,
but $D_P$ must include every entry in $R$ and not include any entry in
$F$.  Subsequently, our algorithm will only ever change $P$ via its
many calls to \aref{a:Shor}. The effect of \aref{a:Shor} is to perform
a Shor-\# operation, that is, to swap two values of $P$.  If doing so
would lose any required entry or introduce any forbidden entry then
the procedure call has no effect. It follows that $D_P$ will always
include every entry in $R$ and avoid including any entry in $F$. In
particular, if the algorithm succeeds in returning a transversal, that
transversal will include $R$ and avoid $F$.

Typically the initial weight of $P$ will be far from $n$. However, we
now argue that \lnref{ln:wimp} will never reduce $w(P)$.  Firstly,
$L[r,P[r]]$ appears elsewhere on $D_P$ because $r\in E$, so we usually
do not mind losing it (there is an exception, which is that it will
occasionally be the case that in one move we lose the only two copies of
$L[r,P[r]]$). Let $c=L^{-1}[r,s]$.  We note that $L[r,c]=s\in U$
so including $(r,c,s)$ in $D_P$ means that we gain a symbol that was
previously unused.  Hence, $w(P)$ will never decrease, nor can it increase
by more than two. It will sometimes remain unchanged and sometimes 
increase by either 1 or 2 depending on factors such as whether
$P^{-1}[c]\in E$, and whether the symbol $L[P^{-1}[c],P[r]]$
appears elsewhere on $D_P$.  The hope is that by hill
climbing we will eventually exit from the {\bf while} loop that begins in
\lnref{ln:firstwhile}.  It may be prudent to incorporate a mechanism
to abandon this loop and return {\bf Fail} if progress has not been
made after sufficiently many steps. However, we did not need that for
our examples.

Once we have a diagonal of weight $w(P)\ge n-2$, our strategy changes.
Because $w(P)$ increases by at most 2 with each call to \aref{a:Shor},
the first time that $w(P)\ge n-2$ we will have $w(P)\in\{n-2,n-1\}$.
If $w(P)=n-1$ then it is impossible for a single Shor-\# move to reach
a transversal, so we accept a decrease to $w(P)=n-2$.  This situation
is handled by the {\bf while} loop that begins in \lnref{ln:2ndwhile}.
Inside that loop, the fact that we choose $r\in E$ ensures that our
call to \aref{a:Shor} either preserves $w(P)$ or reduces it by~1.
It follows that every time we reach \lnref{ln:rr'} we have
$w(P)=n-2$. In this circumstance we try every Shor-\# that has a
chance to move us to a transversal.  If the move does not produce a
transversal then we undo it by calling \aref{a:Shor} with the same
parameters. Of course, if we do obtain a transversal then our goal has
been achieved and we can stop.  In \lnref{ln:rr'}, $r$ and $r'$ should
be distinct, and it never helps to choose them to be the only two rows
where some symbol currently occurs on $D_P$. It could however be the
case that they both contain the same symbol on $D_P$ if that symbol
occurs three times on $D_P$.  If no move takes us to a transversal
then in \lnref{ln:sidestep} we make a random move that does not
decrease $w(P)$, and continue from there. The $^*$ in \lnref{ln:star}
indicates that it
is important here not to choose $r,r'$ to be the only two rows in
which some symbol appears on $D_P$, since otherwise we might reduce
$w(P)$.

The {\bf for} loop in \lnref{ln:for100} is designed to prevent
spending too long in the state of having $w(P)\ge n-2$. The reason for
this is that there are very few Shor-\# moves available when $w(P)\ge n-2$
and it is theoretically possible to get trapped in a situation
where all possibilities have been fully explored and none of them lead
to progress. A more sophisticated version of our algorithm might reset
the counter every time a row is used in a call to \aref{a:Shor} that
has not been used since we reached \lnref{ln:for100}.  As it stands,
if we exhaust the loop in \lnref{ln:for100} then we abandon all
progress and start again. A possible less drastic variant in this
situation would instead make some random Shor-\# moves to move away
from the present $P$, but still keep a relatively high value of
$w(P)$.

\begin{algorithm}[H]
  \caption{\label{a:main}Finds a transversal of a given latin square}
\begin{algorithmic}[1]
  \Input
  \Desc{$L$}{A latin square.}
  \Desc{$R$}{A set of required entries of $L$.}
  \Desc{$F$}{A set of forbidden entries of $L$.}
  \EndInput 
  \Output
  \Desc{$T$}{A transversal of $L$ that contains $R$ and is disjoint from $F$.}
  \EndOutput
  \Procedure{FindTrans$(L,R,F)$}{}
  \State Choose a random $P$ that includes $R$ and avoids $F$.\label{ln:randP}
  \State Initialise $E$, $U$.
  \While{$w(P)<n-2$}\label{ln:firstwhile}
  \State Choose a random $r\in E$.
  \State Choose a random $s\in U$.
  \State \shor$(r,P^{-1}[L^{-1}[r,s]])$.\label{ln:wimp}
  \EndWhile
  \For{$i\in\{1,\ldots,100\}$}\label{ln:for100}
  \While{$w(P)=n-1$}\label{ln:2ndwhile}
    \State Choose a random $r\in E$.
    \State Choose a random $r'\in\Z_n\setminus\{r\}$.
    \State \shor$(r,r')$.
  \EndWhile
  \For{each pair $\{r,r'\}\subseteq E$}\label{ln:rr'}
  \State \shor$(r,r')$.
  \If {$w(P)=n$}
  \State\Return{$P$}.
  \EndIf
  \State \shor$(r,r')$.
  \EndFor
  \State Choose a random$^*$ pair $\{r,r'\}\subseteq E$.\label{ln:star}
  \State \shor$(r,r')$.\label{ln:sidestep}
  \EndFor
  \State \Return{\sc FindTrans}$(L,R,F)$.
  \EndProcedure
  \end{algorithmic}
\end{algorithm}

When using \aref{a:main}, it is the user's responsibility to ensure
that it is plausible that there is a transversal meeting the
requirements.  In an extreme case, it may not even be possible to
choose an initial value of $P$ that satisfies the requirement to
include $R$ and avoid $F$. Even if that does not happen, putting a
large proportion of all entries into $R\cup F$ might limit the available
moves so much that the algorithm gets stuck. On the other hand, if the
user has information that certain entries cannot ever be included in a
transversal then feeding this information to the algorithm might avoid
it wasting a lot of time exploring dead-ends. In the next section we
will see examples of exactly this situation.

Consider an entry $(r,c,s)\in R$. We know that all other entries that
share a row, column or symbol with $R$ cannot end up in a tranversal
that includes $R$. So these entries can be included in $F$ if desired.
Including entries from row $r$ or column $c$ in $F$ will not have any
effect since \aref{a:Shor} would never switch away from $(r,c,s)$.
However, entries that have symbol $s$ are more interesting. It is
plausible to leave them out of $F$ to allow them to be used in
intermediate stages of the search, knowing that they will eventually
need to be switched out of $P$ before we find a transversal. However, it
is probably more efficient to include them in $F$.

\section{Pairwise-intersecting transversals with no universal intersection}\label{s:nodisj}

We start this section by introducing a tool called the $\Delta$-Lemma,
which has proved immensely useful for studying transversals
\cite{Wan11}.  Viewing a latin square $L$ indexed by $\Z_n$
as a set of triples $(r,c,s)\in\Z_n^3$, we define the
function $\Delta : L \rightarrow \Z$ by
\begin{align}\label{Copyfascinating}
\Delta (r,c,s) \equiv s-r-c \bmod n\text{ where } -\dfrac{n}{2} < \Delta(r,c,s) \leq \dfrac{n}{2}.
\end{align}

\begin{lemma}\label{l:Delta}
Let $L$ be a latin square of order $n$ indexed by $\Z_n$. If $T$ is a
transversal of $L$ then, modulo $n$,
\begin{equation}\label{e:Deltaeq}
\sum_{(r,c,s)\in T} \Delta(r,c,s) \equiv
\begin{cases}
	0  & \text{if $n$ is odd},\\
	\frac{1}{2} n & \text{if $n$ is even}.
\end{cases}
\end{equation}
\end{lemma}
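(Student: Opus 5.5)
Since $\Delta(r,c,s)\equiv s-r-c \pmod n$ by definition, the choice of representative in $(-n/2,n/2]$ does not matter once we reduce modulo $n$. The plan is therefore to sum the defining congruence over the entries of $T$ and use the fact that $T$ meets every row, every column and every symbol exactly once.

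The first step is to write
\[
\sum_{(r,c,s)\in T}\Delta(r,c,s)\equiv \sum_{(r,c,s)\in T}s-\sum_{(r,c,s)\in T}r-\sum_{(r,c,s)\in T}c \pmod n.
\]
Because $T$ is a transversal, as $(r,c,s)$ ranges over $T$ the row indices $r$ run over all of $\Z_n$ exactly once. The same holds for the column indices $c$ and for the symbols $s$. Hence each of the three sums equals $\sigma:=\sum_{x=0}^{n-1}x=n(n-1)/2$, and the total is congruent to $\sigma-2\sigma=-\sigma \pmod n$.

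It remains to evaluate $-n(n-1)/2$ modulo $n$. If $n$ is odd, then $(n-1)/2$ is an integer, so $n(n-1)/2$ is a multiple of $n$ and the sum is $\equiv 0$. If $n$ is even, write $n(n-1)/2=(n/2)(n-1)\equiv -n/2\pmod n$. Then $-\sigma\equiv n/2$, and since $-n/2\equiv n/2\pmod n$, this gives $\frac12 n$ as claimed.

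There is no real obstacle here. The only point needing care is that $\Delta$ takes integer values in a chosen residue range, while the congruence \eqref{e:Deltaeq} is only asserted modulo $n$. Consequently every manipulation is done in $\Z_n$, and the specific representative chosen in \eqref{Copyfascinating} plays no role in this lemma.
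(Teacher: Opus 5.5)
Your proof is correct. The paper states this lemma without proof and cites the survey \cite{Wan11}, but yours is the standard argument (sum $s-r-c$ over $T$, use that rows, columns and symbols each run through $\Z_n$ once, then reduce $-n(n-1)/2$ modulo $n$), and the paper itself uses the same counting in \eqref{e:Deltavar} for its $\Delta_2$ variant.
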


Our latin squares are indexed by $\Z_n$ and by default calculations
involving their rows, columns and symbols will be modulo
$n$. Nevertheless, we stress that we have chosen to make $\Delta$ an
integer valued function so that we can use inequalities to contradict
\eref{e:Deltaeq}.

\begin{theorem}\label{t:surrogate}
  For $n=28$ and each even integer $n$ in the interval $[32,10000]$,
  there exists a latin square of order $n$ such that every pair of its
  transversals shares at least one common entry, yet no single entry
  is shared by all transversals.  
\end{theorem}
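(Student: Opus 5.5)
The plan is to build, for each even $n$ in the stated range, an explicit latin square $L$ that is a small perturbation of the cyclic square $B_n$ with $B_n[r,c]=r+c$. The proof then has two independent parts. First, a theoretical argument based on \lref{l:Delta} shows that any two transversals of $L$ intersect. Second, a computer search with \aref{a:main} shows that no entry is pinned.

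The starting observation is that every entry of $B_n$ has $\Delta=0$, so by \lref{l:Delta} a transversal must use entries with total $\Delta$ congruent to $n/2$. I would alter $B_n$ on a small set of cells, for example by switching a few intercalates or a short cyclic trade placed near a fixed region, so that only a bounded set $S$ of "special" entries has nonzero $\Delta$. The values of $\Delta$ on $S$ should be small; the natural design is to make most of them equal to $\pm1$ and a few equal to $\pm2$. A transversal $T$ must then satisfy $\sum_{e\in T\cap S}\Delta(e)\equiv n/2 \pmod n$. Since $|\sum|$ is bounded by a constant much smaller than $n/2$ once $n$ is large, this congruence forces $\sum_{e\in T\cap S}\Delta(e)=\pm n/2$ exactly. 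That pins down how many special entries $T$ must use: a large fraction of them, roughly half of the available positive or negative ones.

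A better way to get the required size is to place the modification in a subsquare-like block of size about $n/2$, so that $S$ has size linear in $n$. The aim is to arrange things so that any transversal must contain more than half of the entries from some distinguished family $\mathcal{F}\subseteq S$, with $\mathcal{F}$ pairwise conflicting only in controlled ways. The key intersecting step is then pigeonhole: if two transversals $T_1,T_2$ each contain more than $|\mathcal{F}|/2$ entries of $\mathcal{F}$, they share an entry. The design goal for the construction is therefore that the row, column and symbol constraints together with \eref{e:Deltaeq} force $|T\cap\mathcal{F}|>|\mathcal{F}|/2$. I would verify this by bounding $\sum\Delta$ over $T\setminus\mathcal{F}$ using the number of rows available to those entries. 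This works uniformly for all even $n$ at least some threshold, and the small exceptional order $n=30$ would be excluded by the arithmetic of the construction (or handled separately, as in \sref{s:mopup}, for $n=28$).

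For the non-pinned part, I would for each $n\le 10\,000$ and each entry $e$ that appears in at least one transversal run \aref{a:main} with $F=\{e\}$ (and $R=\emptyset$) to find a transversal avoiding $e$. To keep the number of runs manageable, I would exploit the symmetry of the construction: cyclic automorphisms $(r,c,s)\mapsto(r+k,c+k,s+2k)$ preserved away from the modified region reduce the entries to a few orbits. Entries that the $\Delta$-argument shows cannot lie in any transversal are fed into $F$ as well, which is exactly the use anticipated at the end of \sref{s:algo}. The remaining entries are then covered by a small set of transversals, found greedily with $F$ equal to the current uncovered set's complement of earlier coverage.

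The main obstacle is the first part, designing the perturbation so that the $\Delta$-count genuinely forces more than half of a family of entries. Guaranteeing existence of transversals avoiding each entry is only computational, which is why the statement is restricted to $n\le10\,000$.
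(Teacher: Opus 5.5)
Your outline matches the paper's architecture: perturb the cyclic square, use \lref{l:Delta} to force every transversal to contain more than half of a fixed family $\mathcal F$, apply pigeonhole, and certify non-pinnedness with \aref{a:main}. The genuine gap is that it never produces the squares the theorem is about. The step you defer, designing the perturbation so that the $\Delta$-count forces more than half of $\mathcal F$, is the entire combinatorial content of the proof.

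Your one concrete suggestion fails. If only a bounded set of entries has nonzero $\Delta$, each of small absolute value, then for large $n$ every diagonal has $|\sum\Delta|<n/2$. So \eref{e:Deltaeq} is unsatisfiable and $L$ has no transversals at all. The congruence does not force $\sum\Delta=\pm n/2$; it forces emptiness, which kills the non-pinned half of the statement (pinning presupposes a transversal). What is needed is a perturbation of $\Theta(n)$ rows whose extreme $\Delta$-sums sit just at $\pm n/2$.

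For $n=4k+2$, the paper's $\GG_n$ has $k-8$ rows offering $\Delta=+2$ and $k-8$ offering $\Delta=-2$ (rows $15$, $17$ and $18\le a<3k-9$). It also has a gadget in rows $0,\dots,16$ with $\Delta\in[-4,4]$. There $(1,2,6)$, $(6,6,15)$, $(11,10,24)$ are the only entries with $\Delta=3$, each the unique positive entry in its row.
\begin{itemize}
\item A diagonal using at most one of them has $\Delta$-sum at most $3\cdot4+3+1+2(k-8)=2k<n/2$ and at least $-2k>-n/2$.
\item Hence every transversal contains at least two of the three; this is your $\mathcal F$, with $|\mathcal F|=3$.
\item Using two of them lifts the attainable maximum to $2k+3>n/2$, so transversals can still exist.
\end{itemize}
$\HH_n$ does the same for $n=4k$. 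Both families need $k\ge9$, so a ``uniform above a threshold'' family does not reach $n=28,32,34$. These require the separate squares $\XX_{28}$, $\XX_{32}$, $\YY_{34}$; note that $\HH_{32}$ has no transversals at all. Also, $n=30$ is simply not covered ($\GG_{30}$ has disjoint transversals), not excluded ``by the arithmetic of the construction''.

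The computational half needs repair too.
\begin{itemize}
\item The map $(r,c,s)\mapsto(r+k,c+k,s+2k)$ is an automorphism of $B_n$ but not of $L$. A transversal passes through every row, including the modified ones, so its image is in general not even a set of entries of $L$. No orbit reduction is available.
\item None is needed. You only have to exhibit a few transversals with empty common intersection, not run roughly $n^2$ searches.
\end{itemize}
For each of the three special entries, the paper finds a transversal omitting it but containing the other two. For $\GG_n$, every transversal with exactly two special entries must also contain $(16,12,29)$. So a fourth transversal containing all three special entries and avoiding $(16,12,29)$ is added; for $\HH_n$ three suffice.

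These searches are made feasible by a step your plan lacks. In every row containing a nonzero $\Delta$-value, the paper commits in advance to the $\Delta$-value that row will contribute, consistently with a total of $n/2$, and puts every other entry of that row into $F$. This goes well beyond your proposal of forbidding only entries that lie in no transversal, and the paper reports it as a massive speed-up.
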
 

We prove \Cref{t:surrogate} by dividing the argument into two main
cases, depending on whether $n\equiv 0\ \text{or}\ 2\bmod 4$. The
exceptional cases $n\in\{28,32,34\}$ will be handled separately.

We begin by considering the case $n= 4k + 2$, where $k\geq4$
is an integer. Let $\GG_n$ be the latin square of order $n$ defined by,  
\begin{equation}\label{Structure}
\GG_n[a,b] = \begin{cases}
    a+b+4 &  \text{if $a\in \{0,5,10\}$, and $b\equiv 1$ mod 4 and $b>a+1$}, \\
    a+b+4& \text{if $(a,b) \in \{(5,3),(10,3),(10,7)\}$},\\
    a+b+3& \text{if $(a,b) \in \{(1,2),(6,6),(11,10)\}$},\\
    a+b+1 &
     \begin{aligned}[t]
     & \text{if } (a,b) \in \{(0,3),(0,4),(5,7),(5,8),(10,11),(10,12),(15,12),\\
     & \qquad \qquad \, \, \,(15,13),(16,12)\},
     \end{aligned}\\
     a+b-1 &
    \begin{aligned}[t]
     & \text{if } (a,b) \in \{(1,3),(1,4),(1,5),(6,7),(6,8),(6,9),(11,11),(11,12),\\
     & \qquad \qquad \,\, \,(11,13),(16,13)\},
     \end{aligned}\\
    a+b-3&\text{if $(a,b) \in \{(4,2),(4,5),(9,6),(9,9),(14,10),(14,13)\}$},\\
    a+b-4&\text{if $a\in \{4,9,14\}$ and $b \equiv 1$ mod 4 and $b>a+1$}, \\ 
    a+b-4&\text{if $(a,b) \in \{(9,3),(14,3),(14,7)\}$},\\
    a+b+2&\text{if $a=15$ and $12\ne b\equiv 0$ mod 2,}\\
     a+b-2&\text{if $a=17$ and $b\equiv 0$ mod 2,}\\
    a+b+2&  \text{if $18 \le a < 3k-9 , a \equiv 0 $ mod 3 and $b \equiv 0$ mod 2,}\\
    a+b-2& \text{if $18 \le a < 3k-9 , a \equiv 2 $ mod 3 and $b \equiv 0$ mod 2,}\\
    a+b & \text{otherwise.}
  \end{cases}
\end{equation}
\begin{lemma}\label{twomodfour}
  Let $n=4k+2$ for an integer $k\ge 9$.  The latin
  square $\GG_n$ contains no pair of disjoint transversals.
\end{lemma}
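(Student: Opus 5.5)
We will use the $\Delta$-Lemma (\lref{l:Delta}). In $\GG_n$ almost every entry has $\Delta=0$. The only nonzero values are $\pm1,\pm2,\pm3,\pm4$, and they occur in a bounded set of rows: rows $0,1,4,5,6,9,10,11,14,15,16,17$ together with the rows $18\le a<3k-9$ with $a\not\equiv1\bmod 3$. Since $n=4k+2$ is even, \eref{e:Deltaeq} forces every transversal $T$ to satisfy $\sum_{T}\Delta\equiv 2k+1\pmod n$. Because $|\Delta|$ is small and nonzero only in few rows, $|\sum_T\Delta|$ is bounded. We will show it is strictly less than $n$. Then the congruence becomes the exact equation $\sum_T\Delta=\pm(2k+1)$, so $T$ must collect a large total of nonzero $\Delta$.

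The key steps are as follows.

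First, bound the contribution of each row. Rows $18\le a<3k-9$ with $a\equiv0\bmod3$ give $\Delta\in\{0,2\}$, and those with $a\equiv2\bmod3$ give $\Delta\in\{0,-2\}$. There are about $k-9$ rows of each type. The special rows $0$–$17$ contribute at most a bounded amount, at most $4$ each and with a definite sign pattern: rows $0,5,10,15$ and $1,6,11$ lean positive, while rows $4,9,14,17$ and $16$ lean negative. So the maximal positive total is roughly $2(k-9)+C$ for a constant $C$ read off from the positive rows, and similarly for the negative total. To reach $2k+1$ in absolute value, $T$ must therefore be nearly extremal. For instance, to reach $+(2k+1)$, $T$ must use $\Delta=+2$ in all (or all but a bounded number) of the $a\equiv0\bmod3$ rows. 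It must also take near-maximal positive values in most of the special positive rows, and it must take $\Delta=0$ in essentially all the negative-leaning rows, since any negative term would have to be compensated and the slack is too small. Symmetrically for $-(2k+1)$.

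Second, convert this near-extremality into forced entries. The plan is to show that the exact bookkeeping pins down a specific entry. A natural candidate is that every transversal with sum $+(2k+1)$ must contain a particular $\Delta=+4$ entry in some row $a\in\{0,5,10\}$. Also, every transversal with sum $-(2k+1)$ must contain a particular $-4$ entry in some row $a\in\{4,9,14\}$, and the two kinds of transversal must overlap on some forced set. More robustly, we will aim to show that two disjoint transversals cannot both exist. If $T_1,T_2$ are disjoint, then in each $+2$-type row they occupy different columns, and row $a$ has only $n/2$ even columns. Then we will analyse the cases in which $T_1,T_2$ both have sum $+(2k+1)$, both have sum $-(2k+1)$, or have opposite signs. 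In each case, the shortage of available extremal entries in the special rows $0$–$17$ gives a contradiction. This comes down to a finite check: the structure in rows $0$–$17$ and columns $\le17$ is fixed and independent of $k$, apart from the columns $b\equiv1\bmod4$ with $b>a+1$.

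The main obstacle is this second step. One must identify, with exact constants, how much slack exists between the maximal attainable $|\sum\Delta|$ and $2k+1$. The choice of $3k-9$ is presumably tuned so that the slack is tiny, perhaps $0$ or $1$. Then one must show that the entries forced in rows $0$–$17$ clash with one another across the two transversals. We would first compute the maximum positive and negative sums exactly, also accounting for the symbol and column constraints in the special rows (a transversal cannot use two $+4$ entries with the same symbol or column). After that we would do the case analysis on which special rows fall short of their maximum, checking that any two surviving transversals share an entry.
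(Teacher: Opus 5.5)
Your setup (apply \lref{l:Delta}, then bound the $\Delta$-sum so the congruence becomes an equation) is the right one. However, the proposal stops exactly where the proof has to happen, and the forced entries you guess at are the wrong ones. No particular $\Delta=+4$ entry of rows $0,5,10$ can be forced, and no particular $\Delta=-4$ entry of rows $4,9,14$ can be forced either. Each of those rows has about $k$ entries with that value: one for each column $b\equiv1\bmod4$ with $b>a+1$, plus a few extras. Your remark that a $\pm2$ row has only $n/2$ even columns also yields nothing, because $2k+1$ such columns is ample room for two disjoint transversals. So the planned case analysis on pairs $T_1,T_2$ has no mechanism that would produce a contradiction, and as written your second step has no content.

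The missing idea is to do the bookkeeping exactly and look at rows $1,6,11$. In each of these rows the only positive $\Delta$-value is a single $+3$, at $(1,2,6)$, $(6,6,15)$ and $(11,10,24)$ respectively; these are the only entries of $\GG_n$ with $\Delta=3$. The other nonzero values in those rows are $-1$. Count rows $15$ and $17$ together with the $k-9$ rows of each residue class in $[18,3k-9)$. This gives $k-8$ rows with maximum $+2$ and $k-8$ rows with minimum $-2$.

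The row minima then sum to $3(-1-4)-1-2(k-8)=-2k>-(2k+1)$. So the case $\sum\Delta=-(2k+1)$, which you intended to treat symmetrically, cannot occur at all, and every transversal has $\sum\Delta=2k+1$ exactly. On the other hand, a diagonal using at most one of the three $\Delta=3$ entries has $\sum\Delta\le 3\cdot4+3+1+2(k-8)=2k<2k+1$. Hence every transversal contains at least two of these three fixed entries, and any two transversals share one of them by pigeonhole. This makes any finite check or analysis of pairs of transversals unnecessary.
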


\begin{proof}
From \Cref{Structure} we see that $-4\le\Delta(r,c,s)\le4$ for all
$(r,c,s)$ in $\GG_n$. Our aim is to show that every
transversal of $\GG_n$ has to include at least two of the entries
\begin{align}\label{Deltavalue3}
 (1,2,6),\ (6,6,15),\ (11,10,24), 
\end{align}
which are the only entries with $\Delta$-value equal to $3$. We claim
that any selection of entries with at most one of the entries in
\Cref{Deltavalue3} cannot form a transversal. Note that the
maximum sum of the $\Delta$ values of such a selection is
\begin{align}\label{ImpCon}
  3\times4+3+ 1+ \sum_{i=1}^{k-8} 2 = 2k
  < 2k+1 = \frac{n}{2},
\end{align}
where $k-8$ is the number of rows containing an entry
whose $\Delta$-value is $2$.   
Also, the sum of the minimum $\Delta$-value from each row is
\begin{align}\label{e:minG}
  3\times(-1-4)-1+\sum_{i=1}^{k-8} (-2)= -2k > -2k-1=-\dfrac{n}{2}.
\end{align}
It follows from \eref{ImpCon} and \eref{e:minG} and \lref{l:Delta}
that every transversal contains at least two of the entries
in \Cref{Deltavalue3}, so no pair of transversals in $\GG_n$ is disjoint.
\end{proof}
It is important to note that the condition $k\ge 9$ in
\Cref{twomodfour} is crucial, as the argument fails for values $4\le k\le 8$.
Furthermore, for every $k$ in this range, the latin square
arising from the construction in \eqref{Structure} admits at least two
disjoint transversals.  In particular, the latin square $\GG_{18}$
exhibits two disjoint transversals, shown below in different
colours. The shaded entries indicate the entries that differ
from the addition table of $\Z_n$.

\begin{equation*}
\vcenter{
\hbox{
\begin{tikzpicture}[scale=0.7]
  \definecolor{SQa}{RGB}{0,114,188}   
  \definecolor{SQb}{RGB}{140,198,62}  
  \def\fillop{0.45} 

  \def\Ldata{%
  0,1,2,4,5,9,6,7,8,13,10,11,12,17,14,15,16,3,
  1,2,6,3,4,5,7,8,9,10,11,12,13,14,15,16,17,0,
  2,3,4,5,6,7,8,9,10,11,12,13,14,15,16,17,0,1,
  3,4,5,6,7,8,9,10,11,12,13,14,15,16,17,0,1,2,
  4,5,3,7,8,6,10,11,12,9,14,15,16,13,0,1,2,17,
  5,6,7,12,9,10,11,13,14,0,15,16,17,4,1,2,3,8,
  6,7,8,9,10,11,15,12,13,14,16,17,0,1,2,3,4,5,
  7,8,9,10,11,12,13,14,15,16,17,0,1,2,3,4,5,6,
  8,9,10,11,12,13,14,15,16,17,0,1,2,3,4,5,6,7,
  9,10,11,8,13,14,12,16,17,15,1,2,3,0,5,6,7,4,
  10,11,12,17,14,15,16,3,0,1,2,4,5,9,6,7,8,13,
  11,12,13,14,15,16,17,0,1,2,6,3,4,5,7,8,9,10,
  12,13,14,15,16,17,0,1,2,3,4,5,6,7,8,9,10,11,
  13,14,15,16,17,0,1,2,3,4,5,6,7,8,9,10,11,12,
  14,15,16,13,0,1,2,17,4,5,3,7,8,6,10,11,12,9,
  17,16,1,0,3,2,5,4,7,6,9,8,10,11,13,12,15,14,
  16,17,0,1,2,3,4,5,6,7,8,9,11,10,12,13,14,15,
  15,0,17,2,1,4,3,6,5,8,7,10,9,12,11,14,13,16}

  \newcommand{\EntryAt}[2]{%
    \pgfmathtruncatemacro{\idx}{#1*18 + #2 + 1}%
    \begingroup\def\val{}%
    \pgfmathtruncatemacro{\kk}{0}%
    \foreach \x [count=\kk] in \Ldata {%
      \ifnum\kk=\idx\relax \xdef\val{\x}\fi
    }%
    \val\endgroup
  }

  \foreach [count=\i from 0] \j in {14,6,10,0,7,1,4,15,9,13,12,5,8,2,17,11,3,16}{
    \fill[SQa, opacity=\fillop] (\j,-\i) rectangle ++(1,-1);
  }
  \foreach [count=\i from 0] \j in {1,8,0,3,11,13,9,5,15,2,7,6,16,12,4,14,10,17}{
    \fill[SQb, opacity=\fillop] (\j,-\i) rectangle ++(1,-1);
  }

  \fill[pattern=north east lines,pattern color=black] (3,-0) rectangle ++(1,-1);
  \fill[pattern=north east lines,pattern color=black] (4,-0) rectangle ++(1,-1);
  \fill[pattern=north east lines,pattern color=black] (5,-0) rectangle ++(1,-1);
  \fill[pattern=north east lines,pattern color=black] (9,-0) rectangle ++(1,-1);
  \fill[pattern=north east lines,pattern color=black] (13,-0) rectangle ++(1,-1);
  \fill[pattern=north east lines,pattern color=black] (17,-0) rectangle ++(1,-1);
  \fill[pattern=north east lines,pattern color=black] (2,-1) rectangle ++(1,-1);
  \fill[pattern=north east lines,pattern color=black] (3,-1) rectangle ++(1,-1);
  \fill[pattern=north east lines,pattern color=black] (4,-1) rectangle ++(1,-1);
  \fill[pattern=north east lines,pattern color=black] (5,-1) rectangle ++(1,-1);
  \fill[pattern=north east lines,pattern color=black] (2,-4) rectangle ++(1,-1);
  \fill[pattern=north east lines,pattern color=black] (5,-4) rectangle ++(1,-1);
  \fill[pattern=north east lines,pattern color=black] (9,-4) rectangle ++(1,-1);
  \fill[pattern=north east lines,pattern color=black] (13,-4) rectangle ++(1,-1);
  \fill[pattern=north east lines,pattern color=black] (17,-4) rectangle ++(1,-1);
  \fill[pattern=north east lines,pattern color=black] (3,-5) rectangle ++(1,-1);
  \fill[pattern=north east lines,pattern color=black] (7,-5) rectangle ++(1,-1);
  \fill[pattern=north east lines,pattern color=black] (8,-5) rectangle ++(1,-1);
  \fill[pattern=north east lines,pattern color=black] (9,-5) rectangle ++(1,-1);
  \fill[pattern=north east lines,pattern color=black] (13,-5) rectangle ++(1,-1);
  \fill[pattern=north east lines,pattern color=black] (17,-5) rectangle ++(1,-1);
  \fill[pattern=north east lines,pattern color=black] (6,-6) rectangle ++(1,-1);
  \fill[pattern=north east lines,pattern color=black] (7,-6) rectangle ++(1,-1);
  \fill[pattern=north east lines,pattern color=black] (8,-6) rectangle ++(1,-1);
  \fill[pattern=north east lines,pattern color=black] (9,-6) rectangle ++(1,-1);
  \fill[pattern=north east lines,pattern color=black] (3,-9) rectangle ++(1,-1);
  \fill[pattern=north east lines,pattern color=black] (6,-9) rectangle ++(1,-1);
  \fill[pattern=north east lines,pattern color=black] (9,-9) rectangle ++(1,-1);
  \fill[pattern=north east lines,pattern color=black] (13,-9) rectangle ++(1,-1);
  \fill[pattern=north east lines,pattern color=black] (17,-9) rectangle ++(1,-1);
  \fill[pattern=north east lines,pattern color=black] (3,-10) rectangle ++(1,-1);
  \fill[pattern=north east lines,pattern color=black] (7,-10) rectangle ++(1,-1);
  \fill[pattern=north east lines,pattern color=black] (11,-10) rectangle ++(1,-1);
  \fill[pattern=north east lines,pattern color=black] (12,-10) rectangle ++(1,-1);
  \fill[pattern=north east lines,pattern color=black] (13,-10) rectangle ++(1,-1);
  \fill[pattern=north east lines,pattern color=black] (17,-10) rectangle ++(1,-1);
  \fill[pattern=north east lines,pattern color=black] (10,-11) rectangle ++(1,-1);
  \fill[pattern=north east lines,pattern color=black] (11,-11) rectangle ++(1,-1);
  \fill[pattern=north east lines,pattern color=black] (12,-11) rectangle ++(1,-1);
  \fill[pattern=north east lines,pattern color=black] (13,-11) rectangle ++(1,-1);
  \fill[pattern=north east lines,pattern color=black] (3,-14) rectangle ++(1,-1);
  \fill[pattern=north east lines,pattern color=black] (7,-14) rectangle ++(1,-1);
  \fill[pattern=north east lines,pattern color=black] (10,-14) rectangle ++(1,-1);
  \fill[pattern=north east lines,pattern color=black] (13,-14) rectangle ++(1,-1);
  \fill[pattern=north east lines,pattern color=black] (17,-14) rectangle ++(1,-1);
  \fill[pattern=north east lines,pattern color=black] (0,-15) rectangle ++(1,-1);
  \fill[pattern=north east lines,pattern color=black] (2,-15) rectangle ++(1,-1);
  \fill[pattern=north east lines,pattern color=black] (4,-15) rectangle ++(1,-1);
  \fill[pattern=north east lines,pattern color=black] (6,-15) rectangle ++(1,-1);
  \fill[pattern=north east lines,pattern color=black] (8,-15) rectangle ++(1,-1);
  \fill[pattern=north east lines,pattern color=black] (10,-15) rectangle ++(1,-1);
  \fill[pattern=north east lines,pattern color=black] (12,-15) rectangle ++(1,-1);
  \fill[pattern=north east lines,pattern color=black] (13,-15) rectangle ++(1,-1);
  \fill[pattern=north east lines,pattern color=black] (14,-15) rectangle ++(1,-1);
  \fill[pattern=north east lines,pattern color=black] (16,-15) rectangle ++(1,-1);
  \fill[pattern=north east lines,pattern color=black] (12,-16) rectangle ++(1,-1);
  \fill[pattern=north east lines,pattern color=black] (13,-16) rectangle ++(1,-1);
  \fill[pattern=north east lines,pattern color=black] (0,-17) rectangle ++(1,-1);
  \fill[pattern=north east lines,pattern color=black] (2,-17) rectangle ++(1,-1);
  \fill[pattern=north east lines,pattern color=black] (4,-17) rectangle ++(1,-1);
  \fill[pattern=north east lines,pattern color=black] (6,-17) rectangle ++(1,-1);
  \fill[pattern=north east lines,pattern color=black] (8,-17) rectangle ++(1,-1);
  \fill[pattern=north east lines,pattern color=black] (10,-17) rectangle ++(1,-1);
  \fill[pattern=north east lines,pattern color=black] (12,-17) rectangle ++(1,-1);
  \fill[pattern=north east lines,pattern color=black] (14,-17) rectangle ++(1,-1);
  \fill[pattern=north east lines,pattern color=black] (16,-17) rectangle ++(1,-1);
  \foreach \i in {0,...,17}{
    \foreach \j in {0,...,17}{
      \node[font=\small] at (\j+0.5,-\i-0.5) {\EntryAt{\i}{\j}};
    }
  }

  \LatinGrid{18}
\end{tikzpicture}
}
}
\end{equation*}

\vspace*{1cm}

Next, for each order $n\in \{22,26,30,34\}$ we exhibit two
transversals $T^{(1)}_n$ and $T^{(2)}_n$ where
$T^{(1)}_n\cap T^{(2)}_n=\emptyset$.

{\small
\begin{align*}
T^{(1)}_{22} = \{& (0,11,11), (1,17,18), (2,2,4), (3,18,21), (4,5,6), (5,14,19), (6,7,12), (7,20,5), (8,15,1), \\
&(9,9,15), (10,0,10), (11,6,17), (12,13,2), (13,16,7), (14,3,16), (15,1,16), (16,8,2),\\
& (17,19,14), (18,4,0), (19,12,9), (20,10,8), (21,21,20) \},\\[1ex]
T^{(2)}_{22} = \{& (0,21,3), (1,10,11), (2,0,2), (3,5,8), (4,19,1), (5,17,4), (6,15,21), (7,7,14), (8,4,12),\\
& (9,8,17), (10,9,19), (11,16,5), (12,6,18), (13,2,15), (14,14,6), (15,18,11), (16,12,4),  \\
&(17,3,20), (18,20,16), (19,13,10), (20,11,9), (21,1,0)\},\\[1ex]
\end{align*}
}
{\small
\begin{align*}
T^{(1)}_{26} = \{& (0,21,25), (1,22,23), (2,24,0), (3,19,22), (4,9,9), (5,3,12), (6,6,15), (7,23,4), \\
&(8,11,19), (9,15,24), (10,7,21), (11,25,10), (12,16,2), (13,4,17), (14,0,14), (15,14,5), \\
& (16,18,8), (17,1,18), (18,2,20), (19,20,13), (20,12,6), (21,8,3), (22,5,1), (23,10,7), \\
&(24,13,11), (25,17,16) \},\\[1ex]
T^{(2)}_{26} = \{& (0,9,13), (1,2,6), (2,0,2), (3,8,11), (4,4,8), (5,11,16), (6,18,24), (7,3,10), (8,12,20),\\
& (9,20,3), (10,17,0), (11,14,23), (12,15,1), (13,13,0), (14,5,19), (15,16,7), (16,7,22), \\
& (17,23,15), (18,25,16), (19,19,12), (20,21,15), (21,1,22), (22,22,18), (23,24,20),\\
& (24,6,4), (25,10,9) \},\\[1ex]
T^{(1)}_{30} = \{& (0,29,3), (1,12,13), (2,4,6), (3,28,1), (4,11,15), (5,21,0), (6,1,7), (7,2,9), (8,18,26),\\
& (9,16,25), (10,7,21), (11,10,24), (12,26,8), (13,9,22), (14,0,14), (15,25,11),   \\
&(16,19,4), (17,15,2), (18,5,23), (19,8,27), (20,27,14), (21,13,4), (22,20,10), (23,6,5), \\
& (24,24,19), (25,3,28), (26,23,19), (27,14,11), (28,22,20), (29,17,16) \},\\[1ex]
T^{(2)}_{30} = \{& (0,14,14), (1,3,3), (2,21,23), (3,13,16), (4,9,9), (5,1,6), (6,15,21), (7,0,7), (8,23,0),\\
& (9,17,22), (10,16,26), (11,24,5), (12,8,20), (13,2,15), (14,7,17), (15,27,13),\\
&  (16,18,4), (17,4,22), (18,22,10), (19,10,29), (20,12,2), (21,20,11), (22,6,28),   \\
&(23,25,20), (24,19,16), (25,5,0), (26,28,24), (27,11,8), (28,29,27), (29,26,25)\},\\[1ex]
T^{(1)}_{34} = \{& (0,5,9), (1,29,30), (2,23,25), (3,19,22), (4,14,18), (5,33,8), (6,6,15), (7,13,20), \\
&(8,20,26), (9,30,5), (10,3,17), (11,25,1), (12,9,21), (13,21,0), (14,32,12), (15,24,7),  \\
&(16,8,24),(17,31,14), (18,15,32), (19,10,30), (20,11,31), (21,17,4), (22,28,19), \\
&  (23,22,5), (24,16,32),(25,1,26), (26,27,8), (27,0,27), (28,4,10), (29,18,1), (30,7,11),  \\
&(31,26,23), (32,12,10), (33,2,1) \},\\[1ex]
T^{(2)}_{34} = \{& (0,13,17), (1,12,13), (2,9,11), (3,4,7), (4,11,15), (5,3,12), (6,16,22), (7,29,2), \\
& (8,21,27), (9,14,18), (10,25,5), (11,10,20), (12,28,6), (13,17,31), (14,23,2), (15,26,8),  \\
& (16,18,0),(17,33,15), (18,24,8), (19,19,3), (20,1,21), (21,7,28), (22,32,10), (23,8,23), \\
&  (24,2,26), (25,27,7), (26,22,14), (27,6,33), (28,31,16), (29,15,11), (30,5,1), \\
& (31,30,29), (32,0,32), (33,20,19) \}.
\end{align*}
}

Suppose that $38\le n=4k+2\le 10000$. Using \aref{a:main} from the
previous section, we found that the latin square $\GG_n$ satisfies
\tref{t:surrogate}. To do this, we needed to find four transversals in
$\GG_n$. For each of the entries in \Cref{Deltavalue3} we found a
transversal that missed the entry in question but included the other
two entries in \Cref{Deltavalue3}. This ensured that no single entry
from \Cref{Deltavalue3} was common to all transversals. However, a
calculation similar to \Cref{ImpCon} implies that all transversals
that contain exactly two of the three entries from \Cref{Deltavalue3}
must also include the entry $(16,12,29)$ in order to satisfy
\lref{l:Delta}. To address this issue, we found a fourth transversal
that avoids the entry $(16,12,29)$ but includes all three entries from
\Cref{Deltavalue3}. Each time we searched for a transversal we
therefore had some entries we wanted to include in the transversal and
some that we definitely wanted to exclude. These requirements could be
handled using the parameters $R$ and $F$ for \aref{a:main}. In all of
our searches, in every row of $\GG_n$ that contained any entry with
nonzero $\Delta$-value, there were entries we could safely put in
$F$. For each row we decided which $\Delta$-value we would use from
that row and put all entries not having that value into $F$.  Doing
this ensured that the algorithm only searched among entries that
satisfied \lref{l:Delta}. This represented a massive speed up compared
to not having any restriction, because it guided the algorithm away
from choices that we knew could not work.

\bigskip

We next introduce the other family of our latin squares which cover the
case where $n\equiv 0\bmod4$.
Let $n=4k$, for an integer $k \geq 4$.
Consider the latin square $\HH_n$ given by
\begin{equation}\label{Structuretwo}
  \HH_n[a,b] = \begin{cases}
    a+b+4 &  \text{if $a\in \{0,5,10\}$ and $b\equiv 1$ mod 4 and $b-4a/5\ne1$}, \\ 
    a+b+3& \text{if $(a,b) \in \{(1,1),(6,5),(11,9)\}$},\\
    a+b+1& \text{if $a\in \{0,5,10\}$ and $1\le b-4a/5\le4$}, \\
    a+b-1&\text{if $a\in \{1,6,11\}$ and $2\le b-4(a-1)/5\le4$}, \\
    a+b-4&\text{if $a\in \{4,9,14\}$ and $b \equiv 1$ mod 4},\\
    a+b+2&  \text{if $15 \le a < 4k-21, a\equiv 3$ mod 4 and $b \equiv 0$ mod 2,}\\
    a+b-2& \text{if $15 \le a < 4k-21, a\equiv 1$ mod 4 and $b \equiv 0$ mod 2,}\\
    a+b & \text{otherwise.}
  \end{cases}
\end{equation}

\begin{lemma}\label{zeromodfour}
  Let $n=4k$ for $k\ge9$. The latin square $\HH_n$ does not have
  any pair of disjoint transversals.
\end{lemma}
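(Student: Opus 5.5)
The plan is to copy the $\Delta$-Lemma argument used for \Cref{twomodfour}. I will show that every transversal of $\HH_n$ contains at least two of the three entries
\[
(1,1,5),\ (6,5,14),\ (11,9,23).
\]
These are exactly the entries with $\Delta$-value $3$, coming from the second case of \eqref{Structuretwo}. Once this is shown, the lemma follows by pigeonhole: two transversals each containing at least two of these three entries must share one.

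First I read off the $\Delta$-values row by row from \eqref{Structuretwo}. Since $n\ge 36$, every value lies in $[-4,4]$, so the integer representative in \eqref{Copyfascinating} is just the offset added to $a+b$.
\begin{itemize}
\item Rows $0,5,10$ have $\Delta$-values in $\{0,1,4\}$.
\item Rows $1,6,11$ have values in $\{-1,0,3\}$, with the $3$ occurring exactly once in each.
\item Rows $4,9,14$ have values in $\{-4,0\}$.
\item Rows $a$ with $15\le a<4k-21$ and $a\equiv3\bmod4$ have values in $\{0,2\}$.
\item Rows in the same range with $a\equiv1\bmod4$ have values in $\{-2,0\}$.
\item All other rows have value $0$ throughout.
\end{itemize}
Each of these rows genuinely contains zero-valued entries, for example columns $b$ that are odd but not $\equiv1\bmod4$.

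The odd integers in $[15,4k-22]$ number $2k-18$, split evenly between the two residues mod $4$. So there are $k-9$ rows of each $\pm2$ type. The hypothesis $k\ge9$ ensures this count is nonnegative and that these rows do not overlap rows $0,\dots,14$.

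Now take any selection of one entry per row that uses at most one of the three $\Delta=3$ entries. Its $\Delta$-sum is at most
\[
3\times 4+3+2(k-9)=2k-3<2k=\tfrac n2 .
\]
Conversely, the smallest possible $\Delta$-sum of any diagonal is
\[
3\times(-4)+3\times(-1)-2(k-9)=-2k+3>-2k=-\tfrac n2 .
\]
Hence the sum lies strictly in $(-n/2,n/2)$ and cannot be $\equiv n/2\bmod n$. By \lref{l:Delta}, such a selection cannot be a transversal.

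The only step needing care is the bookkeeping: checking that the case conditions in \eqref{Structuretwo} give precisely the $\Delta$-value sets listed, and counting the $\pm2$ rows correctly. The conclusion is not tight here; there is slack of $3$ on each side, so minor off-by-one slips in the counts would not break it.
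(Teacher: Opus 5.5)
Your proposal is correct and follows the paper's proof essentially verbatim: the same three $\Delta=3$ entries, the same row-by-row bounds $2k-3<n/2$ and $-2k+3>-n/2$ (with $k-9$ rows of each $\pm2$ type), and the same pigeonhole conclusion via \lref{l:Delta}. One aside is not literally true: you say odd columns $b\not\equiv1\bmod4$ give zero values, but the entry in cell $(0,3)$, for example, has $\Delta=1$; this is harmless, since only the per-row maxima and minima are used.
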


\begin{proof}
From \Cref{Structuretwo} we see that $-4\le\Delta(r,c,s)\le4$ for all
entries $(r,c,s)$ in $\HH_n$.
Our aim is to show that every transversal must include exactly two of
the entries
\begin{align}\label{Delta3}
 (1,1,5),\ (6,5,14),\ (11,9,23), 
\end{align}
which are the only entries with $\Delta$-value equal to $3$. We claim
that any selection of entries with at most one of the entries in
\Cref{Delta3} cannot form a transversal. The reason is that the
maximum sum of the $\Delta$ values of such a selection is
\begin{equation}\label{e:maxH}
  3\times4+3+\sum_{i=1}^{k-9} 2= 2k - 3< 2k = \dfrac{n}{2},
\end{equation}
where $k-9$ is the number of rows containing an entry
with $\Delta$-value equal to $2$.
Also, the sum of the minimum $\Delta$-value from each row is
\begin{equation}\label{e:minH}
  3 \times (-1-4) +\sum_{i=1}^{k-9} (-2)= -2k+3 > -2k = - \dfrac{n}{2}.
\end{equation}
It follows from \eref{e:maxH} and \eref{e:minH} and \lref{l:Delta}
that every transversal contains at least two of the entries
in \Cref{Delta3}, so no pair of transversals in $\HH_n$ is disjoint.
\end{proof}

It should be emphasised that the condition $k\ge9$ in
\Cref{zeromodfour} is necessary, since the argument fails for $4\le
k\le 8$.  Notably, $\HH_{32}$ contains no transversals.
For every $k$ within the range $4\le k \le 7$, the latin square
obtained from the construction in \eqref{Structuretwo} contains at
least two disjoint transversals. As an illustration, the latin square
$\HH_{16}$, includes two disjoint transversals, shown below in
different colours. The shaded cells illustrates the entries that
differ from the addition table of $\Z_n$.
\providecommand{\LatinGrid}[1]{%
  \begin{scope}[x=1cm,y=1cm]
    \draw[thick] (0,0) rectangle (#1,-#1);
    \draw[gray!60] (0,0) grid (#1,-#1);
  \end{scope}%
}

\begin{equation*}
\vcenter{
\hbox{
\begin{tikzpicture}[scale=0.7]
  \definecolor{SQa}{RGB}{0,114,188}  
  \definecolor{SQb}{RGB}{140,198,62} 
  \def\fillop{0.45} 

  \def\Ldata{%
  0,2,3,4,5,9,6,7,8,13,10,11,12,1,14,15,
  1,5,2,3,4,6,7,8,9,10,11,12,13,14,15,0,
  2,3,4,5,6,7,8,9,10,11,12,13,14,15,0,1,
  3,4,5,6,7,8,9,10,11,12,13,14,15,0,1,2,
  4,1,6,7,8,5,10,11,12,9,14,15,0,13,2,3,
  5,10,7,8,9,11,12,13,14,2,15,0,1,6,3,4,
  6,7,8,9,10,14,11,12,13,15,0,1,2,3,4,5,
  7,8,9,10,11,12,13,14,15,0,1,2,3,4,5,6,
  8,9,10,11,12,13,14,15,0,1,2,3,4,5,6,7,
  9,6,11,12,13,10,15,0,1,14,3,4,5,2,7,8,
  10,15,12,13,14,3,0,1,2,4,5,6,7,11,8,9,
  11,12,13,14,15,0,1,2,3,7,4,5,6,8,9,10,
  12,13,14,15,0,1,2,3,4,5,6,7,8,9,10,11,
  13,14,15,0,1,2,3,4,5,6,7,8,9,10,11,12,
  14,11,0,1,2,15,4,5,6,3,8,9,10,7,12,13,
  15,0,1,2,3,4,5,6,7,8,9,10,11,12,13,14}

  \newcommand{\EntryAt}[2]{%
    \pgfmathtruncatemacro{\idx}{#1*16 + #2 + 1}%
    \begingroup\def\val{}%
    \pgfmathtruncatemacro{\kk}{0}%
    \foreach \x [count=\kk] in \Ldata {%
      \ifnum\kk=\idx\relax \xdef\val{\x}\fi
    }%
    \val\endgroup
  }

  \foreach [count=\i from 0] \j in {5,3,6,8,11,1,12,14,15,7,9,2,10,4,0,13}{
    \fill[SQa, opacity=\fillop] (\j,-\i) rectangle ++(1,-1);
  }
  \foreach [count=\i from 0] \j in {12,6,1,10,0,9,5,3,8,15,11,4,13,14,7,2}{
    \fill[SQb, opacity=\fillop] (\j,-\i) rectangle ++(1,-1);
  }

  \fill[pattern=north east lines,pattern color=black] (1, 0) rectangle ++(1,-1);
  \fill[pattern=north east lines,pattern color=black] (2, 0) rectangle ++(1,-1);
  \fill[pattern=north east lines,pattern color=black] (3, 0) rectangle ++(1,-1);
  \fill[pattern=north east lines,pattern color=black] (4, 0) rectangle ++(1,-1);
  \fill[pattern=north east lines,pattern color=black] (5, 0) rectangle ++(1,-1);
  \fill[pattern=north east lines,pattern color=black] (9, 0) rectangle ++(1,-1);
  \fill[pattern=north east lines,pattern color=black] (13,0) rectangle ++(1,-1);

  \fill[pattern=north east lines,pattern color=black] (1,-1) rectangle ++(1,-1);
  \fill[pattern=north east lines,pattern color=black] (2,-1) rectangle ++(1,-1);
  \fill[pattern=north east lines,pattern color=black] (3,-1) rectangle ++(1,-1);
  \fill[pattern=north east lines,pattern color=black] (4,-1) rectangle ++(1,-1);

  \fill[pattern=north east lines,pattern color=black] (1,-4) rectangle ++(1,-1);
  \fill[pattern=north east lines,pattern color=black] (5,-4) rectangle ++(1,-1);
  \fill[pattern=north east lines,pattern color=black] (9,-4) rectangle ++(1,-1);
  \fill[pattern=north east lines,pattern color=black] (13,-4) rectangle ++(1,-1);

  \fill[pattern=north east lines,pattern color=black] (1,-5) rectangle ++(1,-1);
  \fill[pattern=north east lines,pattern color=black] (5,-5) rectangle ++(1,-1);
  \fill[pattern=north east lines,pattern color=black] (6,-5) rectangle ++(1,-1);
  \fill[pattern=north east lines,pattern color=black] (7,-5) rectangle ++(1,-1);
  \fill[pattern=north east lines,pattern color=black] (8,-5) rectangle ++(1,-1);
  \fill[pattern=north east lines,pattern color=black] (9,-5) rectangle ++(1,-1);
  \fill[pattern=north east lines,pattern color=black] (13,-5) rectangle ++(1,-1);

  \fill[pattern=north east lines,pattern color=black] (5,-6) rectangle ++(1,-1);
  \fill[pattern=north east lines,pattern color=black] (6,-6) rectangle ++(1,-1);
  \fill[pattern=north east lines,pattern color=black] (7,-6) rectangle ++(1,-1);
  \fill[pattern=north east lines,pattern color=black] (8,-6) rectangle ++(1,-1);

  \fill[pattern=north east lines,pattern color=black] (1,-9) rectangle ++(1,-1);
  \fill[pattern=north east lines,pattern color=black] (5,-9) rectangle ++(1,-1);
  \fill[pattern=north east lines,pattern color=black] (9,-9) rectangle ++(1,-1);
  \fill[pattern=north east lines,pattern color=black] (13,-9) rectangle ++(1,-1);

  \fill[pattern=north east lines,pattern color=black] (1,-10) rectangle ++(1,-1);
  \fill[pattern=north east lines,pattern color=black] (5,-10) rectangle ++(1,-1);
  \fill[pattern=north east lines,pattern color=black] (9,-10) rectangle ++(1,-1);
  \fill[pattern=north east lines,pattern color=black] (10,-10) rectangle ++(1,-1);
  \fill[pattern=north east lines,pattern color=black] (11,-10) rectangle ++(1,-1);
  \fill[pattern=north east lines,pattern color=black] (12,-10) rectangle ++(1,-1);
  \fill[pattern=north east lines,pattern color=black] (13,-10) rectangle ++(1,-1);

  \fill[pattern=north east lines,pattern color=black] (9,-11) rectangle ++(1,-1);
  \fill[pattern=north east lines,pattern color=black] (10,-11) rectangle ++(1,-1);
  \fill[pattern=north east lines,pattern color=black] (11,-11) rectangle ++(1,-1);
  \fill[pattern=north east lines,pattern color=black] (12,-11) rectangle ++(1,-1);

  \fill[pattern=north east lines,pattern color=black] (1,-14) rectangle ++(1,-1);
  \fill[pattern=north east lines,pattern color=black] (5,-14) rectangle ++(1,-1);
  \fill[pattern=north east lines,pattern color=black] (9,-14) rectangle ++(1,-1);
  \fill[pattern=north east lines,pattern color=black] (13,-14) rectangle ++(1,-1);

  \foreach \i in {0,...,15}{
    \foreach \j in {0,...,15}{
      \node[font=\small] at (\j+0.5,-\i-0.5) {\EntryAt{\i}{\j}};
    }
  }

  \LatinGrid{16}
\end{tikzpicture}
}
}
\end{equation*}
\vspace*{0.5cm}

Next, for each order $n\in \{20,24,28\}$ we exhibit two transversals
$T^{(1)}_n$  and $T^{(2)}_n$ where $T^{(1)}_n\cap T^{(2)}_n=\emptyset$.

{\small
\begin{align*}
T^{(1)}_{20}=\{& (0,9,13), (1,1,5), (2,0,2), (3,3,6), (4,12,16), (5,14,19), (6,6,11), (7,11,18), (8,7,15), \\
&(9,8,17), (10,13,7), (11,19,10), (12,16,8), (13,10,3), (14,15,9), (15,17,12), (16,18,14), \\
& (17,4,1), (18,2,0), (19,5,4) \}, \\[1ex]
T^{(2)}_{20}=\{& (0,19,19), (1,2,2), (2,6,8), (3,13,16), (4,5,5), (5,12,17), (6,9,15), (7,7,14), (8,18,6),\\
& (9,3,12), (10,14,4), (11,11,1), (12,8,0), (13,16,9), (14,1,11), (15,15,10), (16,17,13), \\
&(17,10,7), (18,0,18), (19,4,3) \}, \\[1ex]
T^{(1)}_{24} = \{& (0, 3, 4), (1, 18, 19), (2, 8, 10), (3, 5, 8), (4, 9, 9), (5, 19, 0), (6, 7, 12), (7, 15, 22), (8, 17, 0),\\
& (9, 13, 18), (10, 4, 14), (11, 16, 3), (12, 1, 5), (13, 10, 22), (14, 21, 11), (15, 2, 17),  \\
&(16, 14, 19),(17, 23, 15), (18, 11, 17), (19, 20, 13), (20, 6, 2), (21, 0, 21), (22, 22, 18),\\
& (23, 12, 0) \}, \\[1ex]
T^{(2)}_{24} = \{ &(0, 9, 13), (1, 11, 12), (2, 12, 13), (3, 8, 11), (4, 0, 4), (5, 21, 22), (6, 17, 0), (7, 22, 5), \\
& (8, 19, 19), (9, 7, 16), (10, 5, 19), (11, 14, 0), (12, 6, 12), (13, 4, 14), (14, 20, 10), (15, 16, 8),\\
& (16, 10, 16),  (17, 3, 20), (18, 15, 9), (19, 13, 18), (20, 2, 22), (21, 18, 9), (22, 23, 20),\\
& (23, 1, 23)  \},  \\[1ex]
T^{(1)}_{28}=\{& (0,21,25), (1,3,3), (2,0,2), (3,14,17), (4,20,24), (5,13,22), (6,5,14), (7,11,18) ,\\
& (8,12,20), (9,6,15), (10,25,11), (11,1,12), (12,15,27), (13,22,7), (14,2,16), (15,19,6), \\
& (16,17,5), (17,9,26), (18,23,13), (19,18,9), (20,27,19), (21,7,0), (22,16,10),\\
& (23,26,21),  (24,8,4),  (25,4,1), (26,10,8),(27,24,23) \},  \\[1ex]
T^{(2)}_{28}=\{& (0,24,24), (1,27,0), (2,21,23), (3,4,7), (4,5,5), (5,3,8), (6,6,11), (7,23,2), (8,26,6), \\
&(9,13,18), (10,22,4), (11,11,21), (12,10,22), (13,7,20), (14,17,27), (15,2,17),  \\
& (16,9,25), (17,25,14), (18,1,19), (19,12,3), (20,20,12), (21,8,1), (22,15,9),  \\
&(23,18,13), (24,14,10), (25,19,16), (26,0,26), (27,16,15) \}.
\end{align*}
}

\medskip

Similarly to how we handled $\GG_n$ above, we used \aref{a:main} to
show that $\HH_n$ satisfies \tref{t:surrogate} for all $n=4k$ with
$9\le k\le2500$. For each of the entries in \Cref{Delta3} we found a
transversal that excludes the entry in question, but includes the
other two entries in \Cref{Delta3}.  In each case, we were able to
ensure there was no entry common to all three of these
transversals. Consequently we did not need to find a fourth
transversal, making this case slightly simpler than the $n\=2\bmod4$
case.

\medskip

Combining \lref{twomodfour} and \lref{zeromodfour},
we have thus established \tref{t:surrogate} for all orders except
$n \in \{28,32,34\}$. These exceptional orders will be handled in the
next section.

\section{Examples of Order 28, 32 and 34}\label{s:mopup}

Suppose that $n = 4k$, where $k\ge7$.
Consider the latin square $\XX_n$, given by
\begin{equation}\label{Specialcase1}
  \XX_n[a,b] = \begin{cases}
    a+b+ 3& \text{if $a\in \{0,4\}$ and $b-a \in \{1,5,8\},$}\\
    a+b+ 2 &  \text{if $a\in\{0,1\}$ and $10<b\equiv 1$ mod 2,} \\ 
    a+b+ 2 &  \text{if $a\in \{4,5\}$ and $b\equiv 1$ mod 2 and $b\notin\{5,7,9,11,13\},$} \\ 
    a+b+ 2 &  \text{if $a\in \{1,5\}$ and $b-a \in \{3,6,8\}$}, \\ 
    a+b+ 2& \text{if $a=8$ and $13\ne b\equiv 1$ mod 2},\\
    a+b+ 2&  \text{if $11 \le a < 3k-10 , a \equiv 2 $ mod 3 and $b \equiv 1$ mod 2,}\\
     a+b+1& \text{if $a\in \{1,5\}$ and $b-a \in \{0,1,2,5\},$}\\    
    a+b+ 1& \text{if $(a,b) \in \{(0,4),(2,3), (4,8),(6,7),(8,13),(8,14),(9,13)\}$},\\
    a+b - 1& \text{if $a\in \{2,6\}$ and $b-a \in \{0,4,7\},$}\\
    a+b - 1& \text{if $a\in \{3,7\}$ and $b-a \in \{1,2,5,6\},$ or $(a,b)=(9,14),$}\\
    a+b- 2&  \text{if $11 \le a < 3k-10 , a \equiv 1 $ mod 3 and $b \equiv 1$ mod 2,}\\ 
    a+b-2 &
\begin{aligned}[t]
&\text{if $a\in \{2,3\}$ and $9\ne b\equiv 1$ mod 2 and}\\
&\text{$(a,b)\notin\{(2,3),(2,7),(3,5)\}$,}
\end{aligned}\\
    a+b-2 &
\begin{aligned}[t]
&\text{if $a\in \{6,7\}$ and $13\ne b\equiv 1$ mod 2 and}\\
&\text{$(a,b)\notin\{(6,7),(6,11),(7,9)\}$,}
\end{aligned}\\
    a+b - 2& \text{if $a=10$ and $b\equiv 1$ mod 2,}\\
    a+b - 2& \text{if $a\in \{2,6\}$ and $b-a \in \{2,6\},$}\\
    a+b & \text{otherwise.}
  \end{cases}
\end{equation}

\begin{lemma}\label{orders2832}
  Let $n=4k$ for $k\ge7$.  The latin square $\XX_n$ does not have
  disjoint transversals.
\end{lemma}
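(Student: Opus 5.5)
The plan is to apply the $\Delta$-Lemma (\lref{l:Delta}) as in the proofs of \lref{twomodfour} and \lref{zeromodfour}, but with a finer slack argument. For $\HH_n$ and $\GG_n$ it was enough to show that every transversal contains two of three special entries. Here the entries with $\Delta$-value $3$ come in groups: $(0,1),(0,5),(0,8)$ in row $0$ and $(4,5),(4,9),(4,12)$ in row $4$. Two transversals could use different members of a group, so forcing membership in those groups does not by itself force an intersection. Instead I would show that the $\Delta$-sum of any transversal is pinned to exactly $n/2=2k$, and that this equals the row-wise maximum minus at most $1$. Then only one row can deviate from its maximum, and the rows with a \emph{unique} maximal entry give shared entries.

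First, from \eqref{Specialcase1} I would tabulate, row by row, the maximum and minimum $\Delta$-values. I expect them to be as follows:
\begin{itemize}
\item rows $0,4$: max $3$, min $0$;
\item rows $1,5,8$: max $2$, min $0$;
\item rows $a\in[11,3k-10)$ with $a\equiv2\bmod3$: max $2$, min $0$;
\item rows $a\in[11,3k-10)$ with $a\equiv1\bmod3$: max $0$, min $-2$;
\item rows $2,6$: max $1$, min $-2$, with the value $1$ attained only at $(2,3)$ and $(6,7)$;
\item rows $3,7,10$: max $0$, min $-2$;
\item row $9$: max $1$, min $-1$, with $1$ attained only at $(9,13)$;
\item all other rows: identically $0$.
\end{itemize}
The rows with $a\equiv2\bmod3$ and those with $a\equiv1\bmod3$ in $[11,3k-10)$ are equinumerous, about $k-7$ each. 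Summing the maxima should give $\Sigma_{\max}=2k+1$, and summing the minima should give $\Sigma_{\min}=-2k+3$.

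Next, by \lref{l:Delta} the $\Delta$-sum of a transversal $T$ is $\equiv 2k \pmod{4k}$. It also lies in $[\Sigma_{\min},\Sigma_{\max}]=[-2k+3,2k+1]$, an interval far shorter than $4k$ that contains $2k$ but not $-2k$. Hence the sum is exactly $2k=\Sigma_{\max}-1$. Every row in which $T$ does not use a maximal-$\Delta$ entry costs at least $1$. So $T$ attains the row maximum in all but at most one row.

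Applied to the three rows $2,6,9$, where the maximum is attained only at $(2,3,\star)$, $(6,7,\star)$ and $(9,13,\star)$, this shows that every transversal contains at least two of these three entries. By pigeonhole, any two transversals share one of them, which proves the lemma.

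The main obstacle is the bookkeeping in the first step. I must verify from \eqref{Specialcase1} exactly which cells carry which offsets, in particular the overlapping clauses for rows $0,1,4,5,8,9$. I must also confirm the exact count of rows in $[11,3k-10)$ in each residue class, since the whole argument hinges on $\Sigma_{\max}$ being exactly $2k+1$ rather than $2k+2$. If the count came out one larger, I would instead look for a pigeonhole argument over the Delta-$3$ entries combined with the forced unique entries. I would also double-check that in rows $2,6,9$ no second entry attains the maximum, for example via the clause $(a,b)=(9,14)$, which has value $-1$ and so is harmless.
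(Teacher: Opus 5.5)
Your proposal is correct and is essentially the paper's proof: the row-wise maxima sum to $2k+1$ and the minima to $-2k+3$. The ``about $k-7$'' is exactly $k-7$, since $[11,3k-11]$ contains $3(k-7)$ consecutive integers starting at residue $2 \bmod 3$. Hence the $\Delta$-sum of any transversal is forced to equal $n/2=\Sigma_{\max}-1$. So every transversal uses the unique maximal entries in at least two of rows $2,6,9$, namely $(2,3,6)$, $(6,7,14)$ and $(9,13,23)$, and pigeonhole finishes the proof exactly as you describe.
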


\begin{proof}
  From \Cref{Specialcase1} we see that $-2 \le\Delta(r,c,s)\le 3$ for all entries
  $(r,c,s)$ in $\XX_n$.
The sum of the minimum $\Delta$-value from each row is
$$-1+\sum_{i=1}^{k-2} (-2)= -\dfrac{n}{2}+3 > - \dfrac{n}{2}.$$
Also, the sum of the maximum $\Delta$-value from each row is
$$2 \times (3+1)+1+\sum_{i=1}^{k-4}2=\dfrac{n}{2}+1,$$ which means
that every transversal of $\XX_n$ has to include an entry with the
maximum $\Delta$-value in all but one row. Based on
\Cref{Specialcase1}, the entries $$(2,3,6),\ (6,7,14),\ (9,13,23),$$
are the only ones with the maximum $\Delta$-value in rows $2$, $6$,
and $9$, respectively. This implies that every transversal must
include at least two of these three entries, so no pair of
transversals can be disjoint.
\end{proof}

Next, we present three transversals in each of $\XX_{28}$ and
$\XX_{32}$. In each case, there is no single entry that appears in all
three transversals.  We begin with three transversals in $\XX_{28}$.

{\small
\begin{align*}
\{&(0, 8, 11),
(1, 21, 24),(2, 14, 16),(3, 26, 1),(4, 5, 12),(5, 11, 18),(6, 7, 14),(7, 6, 13),(8, 9, 19),\\
&(9, 13, 23),(10, 20, 2),(11, 24, 7),(12, 15, 27),(13, 12, 25),(14, 1, 15),(15, 23, 10),(16, 17, 5),\\
&(17, 19, 8),(18, 16, 6),(19, 18, 9),(20, 25, 17),(21, 0, 21),(22, 10, 4),(23, 27, 22),(24, 2, 26),\\
&(25, 3, 0), (26, 22, 20),(27, 4, 3)
\},\\[1ex]
\{&
(0, 1, 4),(1, 11, 14),(2, 3, 6),(3, 12, 15),(4, 9, 16),(5, 23, 2),(6, 4, 10),(7, 6, 13),\\
&(8, 19, 1),(9, 13, 23),(10, 14, 24),(11, 17, 0),(12, 15, 27),(13, 24, 9),(14, 7, 21),(15, 10, 25),\\
&(16, 20, 8),(17, 16, 5),(18, 0, 18),(19, 21, 12),(20, 25, 17),(21, 18, 11),(22, 26, 20),(23, 27, 22),\\
&(24, 2, 26),(25, 22, 19),(26, 5, 3),(27, 8, 7)
\},\\[1ex]
\{&
(0, 1, 4),(1, 19, 22),(2, 3, 6),(3, 22, 25),(4, 12, 19),(5, 17, 24),(6, 7, 14),(7, 0, 7),\\
&(8, 5, 15),(9, 24, 5),(10, 8, 18),(11, 2, 13),(12, 11, 23),(13, 16, 1),(14, 26, 12),(15, 15, 2),\\
&(16, 10, 26),(17, 27, 16),(18, 21, 11),(19, 9, 0),(20, 18, 10),(21, 6, 27),(22, 14, 8),(23, 25, 20),\\
&(24, 13, 9),(25, 20, 17),(26, 23, 21),(27, 4, 3)
\}.
\end{align*}
}

\medskip

Similarly, we now present three transversals in $\XX_{32}$.

\medskip

{\small
\begin{align*}
\{&
(0,8,11), (1,17,20), (2,30,0), (3,12,15), (4,5,12), (5,1,8), (6,7,14), (7,2,9), (8,9,19),\\
& (9,13,23), (10,28,6), (11,21,2), (12,19,31), (13,20,1), (14,15,29), (15,3,18), (16,6,22),\\
& (17,11,28), (18,24,10), (19,18,5), (20,25,13), (21,14,3), (22,4,26), (23,16,7), (24,29,21), \\
&(25,23,16), (26,10,4), (27,22,17), (28,31,27), (29,27,24), (30,0,30), (31,26,25)
\},\\[1ex]
\{&
(0,5,8), (1,21,24), (2,3,6), (3,28,31), (4,9,16), (5,11,18), (6,14,20), (7,30,5), (8,29,7),\\
& (9,13,23), (10,24,2), (11,15,28), (12,7,19), (13,0,13), (14,12,26), (15,10,25), (16,16,0),  \\
&(17,26,11),(18,18,4), (19,25,12), (20,22,10), (21,6,27), (22,27,17), (23,31,22), (24,17,9), \\
&(25,4,29), (26,20,14), (27,8,3), (28,19,15), (29,1,30), (30,23,21), (31,2,1)
\},\\[1ex]
\{&
(0,1,4), (1,25,28), (2,3,6), (3,30,1), (4,5,12), (5,17,24), (6,7,14), (7,14,21), (8,19,29),\\
& (9,16,25), (10,6,16), (11,29,10), (12,18,30), (13,4,17), (14,26,8), (15,11,26), (16,31,15), \\
&(17,22,7), (18,23,9), (19,13,0), (20,0,20), (21,10,31), (22,12,2), (23,20,11), (24,27,19), \\
&(25,2,27), (26,28,22),(27,8,3), (28,9,5), (29,21,18), (30,15,13), (31,24,23)
\}.
\end{align*}
}

\medskip

We now present a latin square with the desired properties for order 34.

Consider the following latin square of order $34$:
\begin{equation}\label{Specialcase2}
  \YY_{34}[a,b] = \begin{cases}
    a+b+3& \text{if $a\in \{0,4,8\}$ and $b-a \in \{4,7,10\}$},\\
    a+b+2 &  \text{if $a\in \{0,4,8\}$ and $b\equiv 1$ mod 2 and $b-a\notin \{1,\dots,11\}$,} \\ 
    a+b+2 &  \text{if $(a,b)\in \{(0,2),(4,6),(8,10),(1,0),(5,4),(9,8)\}$,} \\ 
    a+b+1 &  \text{if $a\in \{1,2,5,6,9,10\}$ and $b\equiv 0$ mod 2 and $b-a\notin\{-2,\dots,9\}$}, \\ 
    a+b+1 &  \text{if $a\in \{0,2,4,6,8,10\}$ and $b-a=(-1)^{a/2}$}, \\ 
    a+b-1 &  \text{if $a\in \{1,2,5,6,9,10\}$ and $b\equiv 1$ mod 2 and $b-a\notin\{-1,\dots,10\}$}, \\
    a+b-1 &  \text{if $a\in \{2,6,10\}$ and $b-a=0$}, \\     
   a+b-1& \text{if $a\in \{1,5,9\}$ and $b-a\in \{0,1\}$},\\
    a+b-1& \text{if $a\in \{3,7,11\}$ and $b-a=-2$},\\
    a+b-2 &  \text{if $a\in\{3,7,11\}$ and $b\equiv 0$ mod 2 and $b-a\notin\{-1,\dots7\}$}, \\
    a+b-3& \text{if $a\in \{3,7,11\} $ and $b - a \in \{1,4,7\}$},\\
    a+b & \text{otherwise.}
  \end{cases}
\end{equation}

\begin{lemma}\label{orders34}
  The latin square $\YY_{34}$ has transversals,
  and any pair of transversals of $\YY_{34}$ have at least one entry in
  common, yet no entry is common to all transversals of $\YY_{34}$.
\end{lemma}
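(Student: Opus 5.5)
The plan is to use the same $\Delta$-counting argument as in \lref{orders2832}, with $n=34$ and $n/2=17$. I will then settle the ``no pinned entry'' claim by exhibiting explicit transversals found with \aref{a:main}.

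First, read off from \eqref{Specialcase2} the largest and smallest $\Delta$-value available in each row of $\YY_{34}$ (one must also check that the definition really yields a latin square, which is a routine but tedious verification row by row and column by column). I expect the following table:
\begin{itemize}
\item Rows $0,4,8$ have maximum $3$ and minimum $0$.
\item Rows $1,5,9$ have maximum $2$, attained only by the entries $(1,0,3)$, $(5,4,11)$ and $(9,8,19)$, and minimum $-1$.
\item Rows $2,6,10$ have maximum $1$ and minimum $-1$.
\item Rows $3,7,11$ have maximum $0$ and minimum $-3$.
\item Rows $12,\dots,33$ consist entirely of entries with $\Delta=0$.
\end{itemize}
Hence the sum over rows of the maximum $\Delta$-value is $3\cdot3+3\cdot2+3\cdot1+0=18=\tfrac{n}{2}+1$. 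The sum of the row minima is $0-3-3-9=-15>-\tfrac{n}{2}$. By \lref{l:Delta}, a transversal has $\Delta$-sum congruent to $17$ modulo $34$. Since its sum lies in $[-15,18]$, the sum must be exactly $17$.

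Consequently every transversal uses a row-maximal entry in all rows but one, and in the remaining row it loses exactly $1$. In particular, at least two of the three entries $(1,0,3)$, $(5,4,11)$, $(9,8,19)$ lie in every transversal, because each is the unique row-maximal entry of its row. By pigeonhole, any two transversals share one of these three entries, so no two transversals are disjoint.

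For the remaining claims (transversals exist and no entry is pinned), I would run \aref{a:main} three times. In each run, $R$ consists of two of the three special entries and $F$ contains the third special entry. To prune the search, $F$ also contains every entry in rows $0$--$11$ whose $\Delta$-value is incompatible with a sum of $17$, using the row-by-row choice just described. I then list the three transversals obtained and check that their common intersection is empty. If it is not empty, I rerun with the offending common entry added to $F$ for one of the runs.

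The main obstacle is the careful verification of the row extremes, in particular:
\begin{itemize}
\item that rows $1,5,9$ have a unique $\Delta=2$ entry;
\item that no row among $12$--$33$ carries a nonzero $\Delta$;
\item that rows $3,7,11$ contain a $\Delta=0$ entry.
\end{itemize}
Any slip here changes the count $18$ and breaks the pigeonhole conclusion.
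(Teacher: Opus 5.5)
Your argument is essentially the paper's: the same row-by-row extremes give a maximal $\Delta$-sum of $18=\tfrac{n}{2}+1$ and a minimal sum of $-15>-17$, which forces at least two of $(1,0,3)$, $(5,4,11)$, $(9,8,19)$ into every transversal. The paper then exhibits three explicit transversals of exactly the type you propose, each containing two of these entries and avoiding the third, with no entry common to all three. Note that the existence and no-pinned-entry claims are proved only once such transversals are written down and checked, as the paper does; your search plan alone does not establish them.
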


\begin{proof}
  From \Cref{Specialcase2} we see that $-3 \le\Delta(r,c,s)\le 3$ for
  all entries $(r,c,s)$ in $\YY_{34}$.
The sum of the minimum $\Delta$-value from each row is
$$3 \times (-1-1-3) = - 15 > -17 = - \dfrac{n}{2}.$$
The sum of the maximum $\Delta$-value from each row is
$$3 \times (3+2+1) = 18 = \dfrac{n}{2}+1,$$ which means that each
transversal has to include an entry with the maximum $\Delta$-value in
all but one row. Based on \Cref{Specialcase2}, the
entries $$(1,0,3),\ (5,4,11),\ (9,8,19),$$ are the only entries with
the maximum $\Delta$-value in rows $1$, $5$, and $9$,
respectively. This implies that every transversal must include at
least two of these entries to satisfy \lref{l:Delta}, and hence no
pair of transversals can be disjoint.

We now present three transversals in $\YY_{34}$.

{\small
\begin{align*}
\{&
(0,10,13), (1,20,22), (2,1,4), (3,27,30), (4,14,21), (5,4,11), (6,16,23), (7,29,2), (8,15,26), \\
& (9,8,19), (10,26,3), (11,33,10), (12,3,15), (13,28,7), (14,0,14), (15,13,28), (16,17,33), \\
& (17,12,29), (18,21,5), (19,24,9), (20,11,31), (21,30,17), (22,2,24), (23,23,12), (24,18,8), \\
&  (25,25,16),  (26,6,32), (27,7,0), (28,31,25), (29,32,27), (30,5,1), (31,9,6),  (32,22,20),\\
& (33,19,18)
\},\\[1ex]
\end{align*}
}
{\small
\begin{align*}
\{&
(0,7,10), (1,0,3), (2,32,1), (3,6,9), (4,14,21), (5,28,0), (6,5,12), (7,15,22), (8,12,23),\\
& (9,8,19), (10,4,15), (11,27,4), (12,21,33), (13,18,31), (14,26,6), (15,13,28), (16,31,13), \\
& (17,1,18),(18,11,29), (19,22,7), (20,16,2), (21,33,20), (22,17,5), (23,9,32), (24,2,26), \\
& (25,20,11), (26,24,16), (27,3,30), (28,23,17), (29,19,14), (30,29,25), (31,30,27), (32,10,8), \\
&(33,25,24)
\},\\[1ex]
\{&
(0,10,13), (1,0,3), (2,16,19), (3,25,28), (4,8,15), (5,4,11), (6,2,9), (7,31,4), (8,18,29), \\
& (9,32,8), (10,30,7), (11,5,16), (12,14,26), (13,11,24), (14,7,21), (15,12,27), (16,19,1), \\
&(17,23,6), (18,21,5), (19,17,2), (20,24,10), (21,13,0), (22,3,25), (23,28,17), (24,9,33),  \\
&(25,6,31),(26,22,14), (27,29,22), (28,26,20), (29,1,30), (30,27,23), (31,15,12), (32,20,18), \\
&(33,33,32) 
\}.
\end{align*}
}
By inspection, there is no entry common to all three of the above transversals.
\end{proof}

\section{Dominant transversals}\label{s:dom}

In this section we focus on the special situation where there is a
dominant transversal, namely a transversal that intersects all other
transversals in a given latin square.  As discussed in the
introduction, random latin squares typically do not have dominant
transversals.  We start by showing that two other highly structured
classes of latin squares never have dominant transversals.

\begin{theorem}\label{t:GroupTables}
If $L$ is the Cayley table of a finite group of order $n > 1$, then
$L$ does not have a dominant transversal.
\end{theorem}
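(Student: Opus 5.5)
Given a transversal $T$ of the Cayley table $L$ of a group $G$ of order $n>1$, I will produce a second transversal disjoint from $T$ by applying a symmetry of $L$ that moves every entry. Write the entries of $L$ as triples $(g,h,gh)$ with $g,h\in G$. Then $T=\{(g,\sigma(g),g\sigma(g)):g\in G\}$ for a permutation $\sigma$ of $G$ such that $g\mapsto g\sigma(g)$ is also a permutation.

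The key observation is that for a fixed $x\in G$ the map
\[
\phi_x:(g,h,gh)\mapsto(g,hx,ghx)
\]
sends entries to entries. It permutes the columns via $h\mapsto hx$ and the symbols via $s\mapsto sx$, and it fixes every row. Hence it maps transversals to transversals. Define $T_x=\phi_x(T)$, which is again a transversal.

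Next I check disjointness. Suppose $T$ and $T_x$ share an entry. Since $\phi_x$ fixes rows, that entry lies in some row $g$. Each transversal has exactly one entry in row $g$, so the shared entry must be both $(g,\sigma(g),\star)$ and $(g,\sigma(g)x,\star)$. This forces $\sigma(g)=\sigma(g)x$, that is, $x=e$. Choosing any $x\neq e$, which is possible because $n>1$, therefore gives $T\cap T_x=\emptyset$.

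Since $T_x\neq T$ and the two are disjoint, $T$ fails to intersect another transversal, so $T$ is not dominant. Because $T$ was arbitrary, $L$ has no dominant transversal. If $L$ has no transversal at all, the statement holds vacuously.

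I do not expect a real obstacle here. The only points needing care are that the statement is about the Cayley table of $G$ up to how rows, columns and symbols are labelled (relabelling preserves the transversal structure), and that right multiplication by $x$ acts consistently on both the column and the symbol coordinates.
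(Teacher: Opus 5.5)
Your proof is correct and is essentially the paper's argument. The paper likewise replaces the symbol $g_i$ in each row $i$ of a given transversal by $g_ig$, which is exactly your right-multiplication map $\phi_x$, and notes that these shifted transversals are pairwise disjoint as $g$ ranges over $G$; your single shift by some $x\neq e$ is all that is needed to show the given transversal is not dominant.
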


\begin{proof}
  If $L$ has no transversals at all, then it clearly has no
  dominant transversal. So, suppose $L$ admits a transversal
  consisting of entries $(i,\star,g_i)$ chosen from row $i$. Consider
  a fixed element $g\in L$. If instead we select the entry
  $(i,\star,g_ig)$ from each row $i$, then the resulting set of
  entries again forms a transversal. Furthermore, as $g$ ranges over
  all elements of $G$, the transversals obtained in this way are
  pairwise disjoint, so none of them is dominant.
\end{proof}

A \emph{diagonally cyclic latin square} of order $n$ is a latin square
$L$ indexed by $\Z_n$ that possesses
$\alpha_n=(0,1,\ldots,n-1)$ as an automorphism of order
$n$. Equivalently, $L$ satisfies
$$L[i+1,j+1] \equiv L[i,j]+1\bmod n,$$
for all $i,j\in\Z_n$. Diagonally cyclic latin squares have proved
useful for many problems, including several involving orthogonality
and transversals \cite{Wan04}. However, they will not be useful
for the present purposes:

\begin{theorem}\label{t:diagcyc}
 A diagonally cyclic latin square has no dominant transversal.
\end{theorem}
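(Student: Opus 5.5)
The plan is to use the automorphism $\alpha_n$ together with the fact that in a diagonally cyclic latin square the broken diagonals are themselves transversals. Let $L$ be diagonally cyclic of order $n$. If $n=1$ there is no second transversal to intersect, so the claim is vacuous (or degenerate), and I would treat only $n>1$. Suppose for contradiction that $T$ is a dominant transversal of $L$.

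\emph{Step 1: broken diagonals are transversals.} For $d\in\Z_n$ let $D_d=\{(i,i+d,\star):i\in\Z_n\}$. The relation $L[i+1,j+1]\equiv L[i,j]+1$ gives $L[i,i+d]=L[0,d]+i$. So $D_d$ has one entry in each row, one in each column and every symbol exactly once; that is, $D_d$ is a transversal. The $n$ sets $D_d$ partition the entries of $L$, and they are exactly the orbits of $\alpha_n$ acting on entries via $(r,c,s)\mapsto(r+1,c+1,s+1)$.

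\emph{Step 2: $T$ meets every $D_d$ exactly once.} Since $T$ is dominant and each $D_d$ is a transversal, $T$ meets every $D_d$. If $T=D_d$ for some $d$, then $T$ is disjoint from $D_{d+1}\ne T$ (here $n>1$ is used), which is a contradiction. Otherwise $T$ must intersect each of the $n$ distinct transversals $D_d$. Because $|T|=n$ and the $D_d$ partition $L$, $T$ contains exactly one entry from each $D_d$.

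\emph{Step 3: a rotated copy of $T$ is disjoint from $T$.} Since $\alpha_n$ is an automorphism, $\alpha_n(T)=\{(r+1,c+1,s+1):(r,c,s)\in T\}$ is again a transversal, and it differs from $T$. Suppose $\alpha_n(T)\cap T$ contained an entry $e'=\alpha_n(e)$ with $e\in T$. Then $e$ and $e'$ lie in the same orbit $D_d$. By Step 2 this forces $e=e'$, which is impossible because $\alpha_n$ moves rows by $1$ and $n>1$. Hence $\alpha_n(T)$ and $T$ are disjoint, contradicting dominance. The only point needing care is Step 2; the rest is immediate, and I expect no real obstacle.
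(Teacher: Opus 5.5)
Your proof is correct and follows essentially the same route as the paper: the $\alpha_n$-orbits (the broken diagonals) are $n$ pairwise disjoint transversals, so a dominant $T$ must meet each of them exactly once, and then $\alpha_n(T)$ is a transversal disjoint from $T$. Your Step 3 spells out why $T\cap\alpha_n(T)=\emptyset$ (a common entry would put two entries of $T$ in one orbit), which the paper asserts without justification. Your remark about $n=1$ also flags the degenerate case that the paper tacitly excludes.
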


\begin{proof}
Let $L$ be a diagonally cyclic latin square of order $n$.  The action
of $\alpha_n$ partitions the $n^2$ entries of $L$ into $n$ disjoint
orbits, each of size $n$. By construction, each orbit contains exactly
one entry from each row, each column, and each symbol, and hence each
orbit forms a transversal. Denote these transversals by
$D=\{T_1,\ldots,T_n\}$. None of the $T_i$ can be a dominant
transversal, as transversals in $D$ are pairwise disjoint. Suppose for
contradiction that there exists a transversal $T\notin D$ that is
dominant. Thus, by definition, $T$ must intersect every transversal of
$L$, and in particular it must meet each member of $D$. This forces
$T$ to contain exactly one element from each $T_i$, where $1\le i \le n$.

However, $\alpha_n(T)$ is also a transversal, and we have
$T\cap\alpha_n(T)=\emptyset$. This contradicts the assumption that $T$
is dominant.
\end{proof}

The above results rule out dominant transversals in some
important cases, but we will nevertheless be able to build some latin
squares that have dominant transversals. We propose the following question.

\begin{ques}\label{q:dom}
For every integer $n\ge 7$, does there exist a latin square of order $n$
that admits a dominant transversal.
\end{ques}

We note that there are some small orders for which no latin square has
a dominant transversal.  \tref{t:GroupTables} implies that no latin
square of order $2$, $3$ or $4$ has a dominant transversal, since all
latin squares of these orders are equivalent (up to permutation of the
rows and columns, and relabelling of the symbols) to a Cayley table of
a group of the same order. Also, it was noted in \cite{EW12} that
no latin square of order 6 has a dominant transversal. For many larger
orders we can answer \qref{q:dom} in the affirmative.

\begin{theorem}\label{t:not3mod4}
For every integer $n>4$ with $n\not\equiv 3 \bmod 4$ and $n\ne6$,
there exists a latin square of order $n$ that admits a dominant
transversal.
\end{theorem}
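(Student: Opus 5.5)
We will build, for each admissible $n$, a latin square $L$ indexed by $\Z_n$ that differs from the addition table of $\Z_n$ in a small number of entries. We will then exhibit a transversal $T_0$ and use \lref{l:Delta} to show that every transversal must contain one of a few ``special'' entries, all of which lie in $T_0$. This is the same mechanism as in \lref{twomodfour}, \lref{zeromodfour} and \lref{orders2832}. There, the $\Delta$-sums forced every transversal to use at least two of three entries; here we only need every transversal to use at least one entry from a prescribed set $S\subseteq T_0$. The argument splits by parity and by $n\bmod 4$, since \lref{l:Delta} demands a $\Delta$-sum of $0$ for odd $n$ and $n/2$ for even $n$.

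For even $n$, with $n\equiv0$ or $2\bmod4$, we modify $\Z_n$ by inserting a few intercalate-like blocks, as in \eqref{Structure}. The target is a square in which each row $r$ has a maximal $\Delta$-value $M_r$ with $\sum_r M_r=n/2+m$, where $m$ is small, and every row minimum is bounded below so that a sum of $-n/2$ is impossible. We choose the rows so that every transversal needs near-maximal $\Delta$ in almost all of the modified rows. We arrange that the only entries achieving the needed extremal values in a certain row $r^*$, or in a pair of rows where at least one of them must be extremal, lie on a single transversal $T_0$. Because $n/2$ can be reached only by taking maxima nearly everywhere, every transversal must contain an entry of $T_0$, so $T_0$ is dominant.

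Odd $n\equiv1\bmod4$ is harder: the cyclic group has many transversals, and \lref{l:Delta} only gives a sum of $0$, with no parity obstruction. The plan is again to perturb $\Z_n$ so that the large $\Delta$-values, of order about $n/2$, occur only in a few entries. Then any transversal must balance a large positive contribution with a matching negative one, and both extremes are placed on $T_0$. We expect this to be the main obstacle. The balancing equation $\sum\Delta\equiv0$ allows the trivial solution in which every $\Delta$ is $0$, and many transversals of the unmodified part have that form. So the modification must destroy every $\Delta\equiv0$ solution avoiding $T_0$, which probably requires $\Delta$ values close to $\pm n/2$ in a few rows together with a careful count. Where $n\equiv1\bmod4$ admits it, we would try an arrangement in which the number of rows with $\Delta=\pm2$ is odd relative to $n/2$. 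The failure of this parity trick for $n\equiv3\bmod4$ is presumably why that case is excluded.

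Existence of $T_0$ itself will be shown either by an explicit formula, such as a shifted main diagonal adjusted on the modified rows, or by a computer search using \aref{a:main} for small $n$. For sporadic small orders such as $n=5,8,9,12$, where the general construction needs more room (cf.\ the requirement $k\ge9$ in \lref{twomodfour}), we will give explicit squares verified by computer. Finally, we will check that the constructions cover every $n>4$ with $n\not\equiv3\bmod4$ and $n\ne6$.
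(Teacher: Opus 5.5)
There is a genuine gap, most seriously for $n\equiv1\bmod4$. You rightly note that for odd $n$, \lref{l:Delta} only demands $\sum\Delta\equiv0\bmod n$, and transversals avoiding any prescribed set can satisfy this trivially. But you supply no mechanism that rules them out, and the suggested ``parity trick'' with rows of $\Delta=\pm2$ being ``odd relative to $n/2$'' has no content when $n$ is odd.

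The missing idea is to replace perturbations of $\Z_n$ by a subsquare together with a mod-$2$ invariant (Mann's theorem). For $n=4k+1$, take $M$ with a subsquare $S$ of order $2k$ on $\Gamma=\{0,\dots,2k-1\}$ (possible by Ryser's theorem), with $S$ chosen to have a transversal $T_1$. Counting shows that the blocks beside and below $S$ contain only symbols outside $\Gamma$. Hence the block indexed by $(\Z_n\setminus\Gamma)^2$ contains exactly $2k+1$ cells with symbols outside $\Gamma$, one in each of its rows and columns and one for each such symbol. Call these cells $T_2$; then $T_1\cup T_2$ is a transversal. With $\theta$ the indicator of $\Z_n\setminus\Gamma$, the quantity $\theta(s)-\theta(r)-\theta(c)$ is odd exactly on $T_2$. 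Over any transversal it sums to $(2k+1)-2(2k+1)$, which is odd, so every transversal meets $T_2$. This is also why the method excludes $n\equiv3\bmod4$: the odd cells form a partial transversal only when the subsquare has order $(n-1)/2$, and then the complement $(n+1)/2$ is odd only if $n\equiv1\bmod4$. (The paper notes that every order-$9$ square with a dominant transversal contains such a subsquare of order $4$, so your route would have to rediscover this structure anyway.)

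For even $n$, your plan leaves open exactly the hard step. Existence of $T_0$ for infinitely many $n$ cannot come from a computer search, and no formula is given; this is the kind of existence statement the paper itself could not establish in general for $\GG_n$ and $\HH_n$. Moreover, you do not need it. Forcing every transversal through a unique extremal entry in row $r^*$ just means building a square with a pinned entry, and that is \tref{t:pinned}, which is already available. For even $n\ge10$ it gives a square with a pinned entry, and since ``pinned'' includes the existence of a transversal, every transversal of that square is dominant. Only $n=5$ and $n=8$ then require explicit examples. Your sporadic list ($5,8,9,12$) does not match this: $9$ is covered by the subsquare construction, while a construction needing room as in \lref{twomodfour} would leave many more small even orders uncovered than $8$ and $12$.
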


\begin{proof}
From \cite{EW12} we know that there is a unique species of latin
square of order 8 with a dominant transversal.  Here is a
representative of that species, with one of its three dominant
transversals highlighted. It has 24 transversals in total.
\[
\begin{tikzpicture}[scale=0.7]

\foreach \i/\j in {0/7,1/0,2/4,3/3,4/6,5/2,6/5,7/1} {
  \fill[SkyBlue!70!white] (\j,-\i) rectangle (\j+1,-\i-1);
}
\LatinGrid{8}
\foreach \i/\j/\num in {
  0/0/3, 0/1/7, 0/2/6, 0/3/0, 0/4/4, 0/5/5, 0/6/2, 0/7/1,
  1/0/7, 1/1/2, 1/2/1, 1/3/4, 1/4/5, 1/5/3, 1/6/6, 1/7/0,
  2/0/6, 2/1/1, 2/2/4, 2/3/5, 2/4/2, 2/5/7, 2/6/0, 2/7/3,
  3/0/0, 3/1/5, 3/2/7, 3/3/6, 3/4/1, 3/5/2, 3/6/3, 3/7/4,
  4/0/4, 4/1/3, 4/2/2, 4/3/7, 4/4/0, 4/5/1, 4/6/5, 4/7/6,
  5/0/5, 5/1/4, 5/2/3, 5/3/1, 5/4/6, 5/5/0, 5/6/7, 5/7/2,
  6/0/2, 6/1/6, 6/2/0, 6/3/3, 6/4/7, 6/5/4, 6/6/1, 6/7/5,
  7/0/1, 7/1/0, 7/2/5, 7/3/2, 7/4/3, 7/5/6, 7/6/4, 7/7/7
} \node at (\j+0.5,-\i-0.5) {\num};


\end{tikzpicture}
\]
For even $n>8$ the latin squares in \Cref{t:pinned} admit dominant
transversals; in fact, every transversal in those latin squares is
dominant.

Henceforth, we can assume that $n\equiv1\bmod4$.  For order $n=5$, the
following latin square admits three distinct transversals, highlighted
in three different colours.
\begin{equation}
\begin{tikzpicture}[scale=0.7]
\newcommand{\shadecell}[3]{\fill[#3,opacity=0.35] (#2,-#1-1) rectangle ++(1,1);}
\definecolor{tOne}{RGB}{228,26,28}   
\definecolor{tTwo}{RGB}{55,126,184}  
\definecolor{tThree}{RGB}{77,175,74} 

\foreach \i/\j in {0/2, 1/0, 2/3, 3/1, 4/4} {\shadecell{\i}{\j}{tOne}}
\foreach \i/\j in {0/4, 1/0, 2/1, 3/3, 4/2} {\shadecell{\i}{\j}{tTwo}}
\foreach \i/\j in {0/3, 1/0, 2/2, 3/4, 4/1} {\shadecell{\i}{\j}{tThree}}

\foreach \i/\j/\num in {
    0/0/0, 0/1/1, 0/2/2, 0/3/3, 0/4/4,
    1/0/1, 1/1/0, 1/2/3, 1/3/4, 1/4/2,
    2/0/2, 2/1/3, 2/2/4, 2/3/0, 2/4/1,
    3/0/3, 3/1/4, 3/2/1, 3/3/2, 3/4/0,
    4/0/4, 4/1/2, 4/2/0, 4/3/1, 4/4/3}
{
    \node at (\j+0.5,-\i-0.5) {\num};
}

\LatinGrid{5}

\end{tikzpicture}
\end{equation}
The entry $(1,0,1)$ is pinned, so each transversal is dominant.

Now suppose that $n=4k+1>5$ and let $\Gamma=\{0,1,\dots,2k-1\}\subset\Z_n$.
Let $M$ be a latin square of order $n$, indexed by $\Z_n$,
containing a latin subsquare $S$ of order $2k$ indexed by $\Gamma$.
Such an $M$ is not
hard to build, and existence can be deduced from a classic Theorem of
Ryser \cite{Rys51}. Note that $M$ decomposes into 4 blocks
\begin{align}\label{e:MannsTheorem}
M=\begin{bmatrix}
S&X\\
Y&Z\\
 \end{bmatrix}.
\end{align}
As $2k>2$, by replacing $S$ if necessary, we may assume that $S$ has a
transversal $T_1$.  Every symbol in $\Z_n\setminus\Gamma$ occurs $2k$
times within the rectangle $X$, and hence is missing from exactly one
column of $X$. Moreover, each column of $X$ is missing exactly one
symbol from $\Z_n\setminus\Gamma$.  Analogous properties hold for the
rows of $Y$.  Let $T_2$ be the set of $2k+1$ entries in $Z$ that have
symbols in $\Z_n\setminus\Gamma$. It follows that $T=T_1\cup T_2$ is a
transversal of $M$. We claim that $T$ is dominant.

The fact that $T$ is dominant can be deduced from a theorem of
Mann \cite{M44}, of which we now give a simple proof using a
variant of \lref{l:Delta}. Let
\[
\theta(x)=\begin{cases}
0&\text{if }x\in\Gamma,\\
1&\text{if }x\in\Z_n\setminus\Gamma,\\
\end{cases}
\]
and define $\Delta_2:M\rightarrow\Z_2$ by
$\Delta_2(r,c,s)=\theta(s)-\theta(r)-\theta(c)$. Note that $\Delta_2$ has
value zero on every entry of $M$ except for those in $T_2$.
However, in any transversal $T'$ of $M$,
\begin{equation}\label{e:Deltavar}
\sum_{(r,c,s)\in T'}\Delta_2(r,c,s)
\=\sum_{s}\theta(s)-\sum_{r}\theta(r)-\sum_{c}\theta(c)
\=2k+1\=1\bmod2.
\end{equation}
It follows that $T'$ must include at least one entry from $T_2$, showing
that $T$ is dominant.
\end{proof}

From \cite{EW12} we know that there are exactly 19 species of order 7
and 36\,007 species of order 9 that admit dominant transversals.
Every one of these 36\,007 species of order 9 contains a subsquare of
order 4 that possesses a transversal. The argument in
\eref{e:Deltavar} demonstrates that such latin squares have a dominant
transversal.  The fact that no latin square of order $9$ has a
dominant transversal for any other reason might hint that there are
large orders (necessarily of order $3\bmod4$) for which no latin
square has a dominant transversal.

Among the 19 species of order 7 with dominant transversals,
the numbers of all transversals are
$$3,7,7,11,12,13,14,15,15,19,19,19,20,22,23,23,25,31,33.$$
For order $n=7$, we exhibit two distinct latin squares that
contain dominant transversals:
\begin{equation}\label{e:ord7}
\begin{tikzpicture}[scale=0.7]

\foreach \i/\j in {3/3,4/4,5/5,6/6} {
  \fill[SkyBlue!70!white] (\j,-\i) rectangle (\j+1,-\i-1);
}
\LatinGrid{7}
\foreach \i/\j/\num in {
  0/0/1, 0/1/0, 0/2/2, 0/3/6, 0/4/5, 0/5/4, 0/6/3,
  1/0/0, 1/1/2, 1/2/1, 1/3/4, 1/4/3, 1/5/6, 1/6/5,
  2/0/2, 2/1/1, 2/2/0, 2/3/5, 2/4/6, 2/5/3, 2/6/4,
  3/0/6, 3/1/4, 3/2/5, 3/3/3, 3/4/1, 3/5/2, 3/6/0,
  4/0/5, 4/1/3, 4/2/6, 4/3/1, 4/4/4, 4/5/0, 4/6/2,
  5/0/4, 5/1/6, 5/2/3, 5/3/2, 5/4/0, 5/5/5, 5/6/1,
  6/0/3, 6/1/5, 6/2/4, 6/3/0, 6/4/2, 6/5/1, 6/6/6
} \node at (\j+0.5,-\i-0.5) {\num};

\end{tikzpicture}
\qquad
\begin{tikzpicture}[scale=0.7]

  \foreach \i/\j in {0,0,1/1,2/2,3/3,4/4,5/5,6/6} {
    \fill[SkyBlue!70!white] (\j,-\i) rectangle (\j+1,-\i-1);
  }

\LatinGrid{7}
\foreach \i/\j/\num in {
    0/0/0, 0/1/1, 0/2/2, 0/3/3, 0/4/4, 0/5/5, 0/6/6,
    1/0/1, 1/1/2, 1/2/0, 1/3/5, 1/4/6, 1/5/4, 1/6/3,
    2/0/2, 2/1/0, 2/2/1, 2/3/6, 2/4/5, 2/5/3, 2/6/4,
    3/0/3, 3/1/6, 3/2/5, 3/3/4, 3/4/0, 3/5/1, 3/6/2,
    4/0/4, 4/1/5, 4/2/6, 4/3/0, 4/4/3, 4/5/2, 4/6/1,
    5/0/5, 5/1/3, 5/2/4, 5/3/2, 5/4/1, 5/5/6, 5/6/0,
    6/0/6, 6/1/4, 6/2/3, 6/3/1, 6/4/2, 6/5/0, 6/6/5
}
{
    \node at (\j+0.5,-\i-0.5) {\num};
}
\end{tikzpicture}
\end{equation}
The first latin square in \eref{e:ord7} possesses only three
transversals, each of which includes the four highlighted pinned
entries.  The second latin square in \eref{e:ord7} contains 31
transversals, including 9 that are dominant, one of which is
highlighted.  It is the only species that has more than one subsquare
of order 3 among the 19 species with dominant transversals.  Three
other species contain exactly one subsquare of order 3, while the
remaining fifteen contain none.  Subsquares of order $3=(n-1)/2$ are
of interest given how we handled the $n\=1\bmod4$ case in
\tref{t:not3mod4}. However, the latin square of order 7 with the most
subsquares of order 3 comes from the Steiner quasigroup, and
\tref{t:diagcyc} shows that it does not have a dominant transversal.

\section{Concluding remarks}

\tref{t:surrogate}, \Cref{twomodfour} and \Cref{zeromodfour} all
support \cjref{cj:main}. However, we have not been able to identify a
general argument guaranteeing the existence of the transversals
required to prove the conjecture. It is entirely possible that
\cjref{cj:main} is true for some values of $n<28$. It would be most
interesting to know if it can be true for any odd value of $n$.

Transversal properties of latin squares of odd order seem to differ in
important ways from the even order case, and \lref{l:Delta} gives some
hint as to why the two cases are different. The infamous conjecture
known as Ryser's conjecture asserts that all latin squares of odd
order have a transversal. A latin square is equivalent
to a proper colouring of the edges of the complete bipartite graph
$K_{n,n}$ using $n$ colours. A transversal is a rainbow perfect
matching in such a colouring, namely, a perfect matching in which all
edges have different colours. Ryser's conjecture asserts that there
should be a rainbow perfect matching whenever $n$ is odd. However, if
a latin square has a dominant transversal it means that we can remove
a perfect matching from $K_{n,n}$, then properly colour the remaining
edges with $n$ colours in such a way that there is no rainbow perfect
matching. We are thus getting close, in one sense, to counterexamples
to Ryser's conjecture when we prove results like \tref{t:dom}.
This adds another motivation for studying dominant transversals.

It would be nice to know whether \tref{t:dom} generalises to large orders
that are $3\bmod4$.  Order $9$ is the largest order for which complete
enumerations of dominant transversals have been performed. We noted in
\sref{s:dom} that for this order the only latin squares with dominant
transversals, have dominant transversal for a reason that does not
apply to orders that are $3\bmod4$. This fact, together with our
failure to find examples of orders 11 or 15, makes us uncertain what we
expect the answer to \qref{q:dom} will be.

\subsection*{Acknowledgements} This research was supported by
  Australian Research Council grant DP250101611.
  
  \let\oldthebibliography=\thebibliography
  \let\endoldthebibliography=\endthebibliography
  \renewenvironment{thebibliography}[1]{%
    \begin{oldthebibliography}{#1}%
      \setlength{\parskip}{0.2ex}%
      \setlength{\itemsep}{0.2ex}%
  }%
  {%
    \end{oldthebibliography}%
  }

\end{document}